\theoremstyle{plain}
\newtheorem{theorem}{Theorem}
\newtheorem{lemma}{Lemma}
\newtheorem{proposition}{Proposition}
\newtheorem{corollary}{Corollary}
\theoremstyle{remark}
\newtheorem{remark}{Remark}
\def\R{\mathbb{R}}
\def\C{\mathbb{C}}
\def\H{\mathbb{H}}
\def\S{\mathbb{S}}
\def\M{\mathbb{M}}
\def\Z{\mathbb{Z}}
\begin{document}

\title[Totally umbilical hypersurfaces] {Totally umbilical hypersurfaces\\ of
manifolds admitting a unit Killing field}

\author[R. Souam]{Rabah Souam}
\address{Institut de Math\'ematiques de Jussieu, CNRS UMR 7586\\ Universit\'e Paris
Diderot -- Paris 7, ``G\'eom\'etrie et Dynamique''\\ Site Chevaleret, Case 7012\\
75205 -- Paris Cedex 13\\ France}
\email{souam@math.jussieu.fr}

\author[J. Van der Veken]{Joeri Van der Veken}
\address{Katholieke Universiteit Leuven\\ Departement
Wiskunde\\ Celestijnenlaan 200 B -- Box 2400\\ BE-3001 Leuven\\ Belgium}
\email{joeri.vanderveken@wis.kuleuven.be}
\thanks{The second author is a post-doctoral researcher supported by the Research Foundation -- Flanders (F.W.O.)}
\thanks{This work was done while the second author visited the Universit\'e Paris Diderot -- Paris 7 supported by a grant of the Research Foundation -- Flanders (F.W.O.)}

\begin{abstract}
We prove that a Riemannian product of type $\M^n \times \R$ admits totally umbilical
hypersurfaces if and only if $\M^n$ has locally the structure of a warped product and we
give a complete description of the totally umbilical hypersurfaces in this case. Moreover,
we give a necessary and sufficient condition under which a Riemannian three-manifold carrying
a unit Killing field admits totally geodesic surfaces and we study local and global properties 
of three-manifolds satisfying this condition.
\end{abstract}

\keywords{totally umbilical, totally geodesic, product manifold, Killing field, warped product}

\subjclass[2000]{}

\maketitle

%%%%%%%%%%%%%%%%%%%%%%%%%%%%%%%%%%%%%%%%%%%%%%%%%%%%%%%%%%%%%%%%%%%%%%%%%%%%%%%%%%%%%%%%%%%%%%%%%%%%%%%%%%%%%%%%%%%%%%%

\section{Introduction}

The starting point of the research leading to this paper was the
classification of totally umbilical surfaces in three-dimensional
homogeneous spaces with a four-dimensional isometry group, which 
can be found in \cite{ST2} and \cite{VdV}. As is well-known, these
three-spaces admit Riemannian submersions onto surfaces of constant
Gaussian curvature and the unit vector field tangent to the fibers is
Killing. It turns out that such a space admits totally umbilical 
surfaces if and only if it is a Riemannian product of the base
surface and the fibers, i.e., if and only if its universal covering
is either $\S^2(\kappa) \times \R$ or $\H^2(\kappa) \times \R$. Moreover, the obtained
classification of totally umbilical surfaces was extended to a 
classification of totally umbilical hypersurfaces of the conformally
flat symmetric manifolds $\S^n(\kappa) \times \R$ and $\H^n(\kappa) \times \R$ in 
\cite{VdVV} and \cite{CKVdV}.

Two questions for further generalizations come up naturally now.
\begin{enumerate}
\item When does a Riemannian product of type $\M^n \times \R$ admit totally umbilical hypersurfaces and what are they?
\item When does a Riemannian three-space with a unit Killing field admit totally umbilical surfaces and what are they?
\end{enumerate}

In this paper, we give a complete answer to the first question. 
Our Theorem \ref{theo1} states that a necessary and sufficient 
condition for $\M^n \times \R$ to admit totally umbilical hypersurfaces
is that $\M^n$ itself has (locally) the structure of a warped product.
Moreover, we give a full description of all totally umbilical 
hypersurfaces of such a manifold and we remark that our results
are still valid if we start with a warped product instead of with 
a Riemannian product as ambient space.

For the second question we give a partial answer. We find a necessary
and sufficient condition for a three-manifold with a unit Killing field to 
admit totally geodesic surfaces. Remark that it is not necessary
that the three-manifold reduces to a Riemannian product, a fact which is 
already illustrated by the standard three-sphere. We describe the
totally geodesic surfaces and we study the local and global properties
of three-spaces satisfying our condition.

We are grateful to Eric Toubiana for valuable remarks on a first version of this work.

%%%%%%%%%%%%%%%%%%%%%%%%%%%%%%%%%%%%%%%%%%%%%%%%%%%%%%%%%%%%%%%%%%%%%%%%%%%%%%%%%%%%%%%%%%%%%%%%%%%%%%%%%%%%%%%%%%%%%%%

\section{Preliminaries}

Let $(M^n,g) \to (\tilde M^{n+1}, \tilde g)$ be an isometric immersion
between Riemannian manifolds. If $N$ is a unit normal 
vector field along the immersion and $\nabla$ and $\tilde \nabla$ are the Levi-Civita 
connections of $(M^n,g)$ and $(\tilde M^{n+1}, \tilde g)$, then the 
second fundamental form $h$ and the shape 
operator $S$ associated to $N$ are defined by the formulas of Gauss and Weingarten: for any 
vector fields $X,Y$ on $M^n$ one has
$$ \tilde\nabla_X Y = \nabla_X Y + h(X,Y)N, \quad SX = -\tilde\nabla_X N. $$
It is easy to check that $S$ is a symmetric $(1,1)$-tensor field on $M^n$,
which is related to $h$ by $h(X,Y)=g(SX,Y)$. 
We call the immersion \emph{totally umbilical} if $S$ is a multiple of 
the identity at every point and we call it \emph{totally geodesic} if 
$S$ vanishes identically.

In our results, some special types of vector fields on Riemannian manifolds
will occur. Let $(M,g)$ be a Riemannian manifold and let $\xi$ be a vector
field on $M$. Then $\xi$ is said to be \emph{Killing} if and only if 
$\mathcal{L}_{\xi}g = 0$, where $\mathcal L$ is the Lie derivative. This
condition means that the flow of $\xi$ consists of isometries, and in terms
of the Levi-Civita connection $\nabla$ one can reformulate it as
$$ g(\nabla_X\xi,Y) + g(\nabla_Y\xi,X) = 0 $$
for all $p \in M$ and $X,Y \in T_pM$. 

More generally, $\xi$ is said to be \emph{conformal} if and only if 
$\mathcal{L}_{\xi}g = 2 \phi g$ for some function $\phi$. This
means that the flow of $\xi$ consists of conformal maps.

Finally, we say that $\xi$ is \emph{closed conformal} if and only if
it is conformal and its dual one-form is closed. It can be checked by 
a straightforward computation that $\xi$ is closed conformal if and only if
$$ \nabla_X \xi = \phi X, $$
for all $p \in M$ and all $X \in T_pM$, where $\phi$ is as above.

In all what follows the manifolds will be assumed of class 
$C^\infty$.

%%%%%%%%%%%%%%%%%%%%%%%%%%%%%%%%%%%%%%%%%%%%%%%%%%%%%%%%%%%%%%%%%%%%%%%%%%%%%%%%%%%%%%%%%%%%%%%%%%%%%%%%%%%%%%%%%%%%%%%

\section{Totally umbilical hypersurfaces of manifolds of type $\M^n\times I$} \label{sec - product}

Denote by $\M^n\times I$ the Riemannian product of a Riemannian
manifold $(\M^n, g_{\M^n})$ and an open interval $I$ of the
Euclidean line and let $\pi: \M^n\times I  \to \M^n$ be the
canonical projection. We shall denote by $\xi$ a unit vector field
on $\M^n \times I$, tangent to the fibres of $\pi$. Remark that
$\xi$ is a unit Killing field.

There are two natural families of examples of totally geodesic hypersurfaces
of $\M^n \times I$, namely the slices $\M^n \times \{t_0\}$, $t_0 \in I$ and
the inverse images under $\pi$ of totally geodesic hypersurfaces of $\M^n$, 
if they exist. We are thus interested in totally umbilical hypersurfaces
which are at some point neither orthogonal nor tangent to $\xi$.

If $\Sigma$ is a hypersurface of $\M^n\times I $ with unit normal
$N$, one can define a vector field $T$ and a real-valued function
$\nu$ on $\Sigma$ by the following orthogonal decomposition of
$\xi$:
\begin{equation} \label{eq - decomposition1}
\xi = T + \nu N.
\end{equation}
Then $T$ and $\nu$ satisfy the following equations.

\begin{lemma} \label{lem1}
Let $\Sigma$ be a hypersurface in $\M^n\times I$ and denote by
$\nabla$ the Levi-Civita connection of $\Sigma$, by $S$ the shape
operator of the immersion and by $h$ the second fundamental form.
Then for any vector $X$ tangent to $\Sigma$:
 $$ \nabla_X T = \nu SX, \quad X(\nu) = -h(X,T). $$
\end{lemma}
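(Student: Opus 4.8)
The plan is to exploit the fact that $\xi$ is a parallel vector field on the ambient product $\M^n \times I$. Indeed, since $\xi$ is the unit field tangent to the $\R$-factor of a Riemannian product, the ambient Levi-Civita connection $\tilde\nabla$ satisfies $\tilde\nabla_Z \xi = 0$ for every ambient vector field $Z$. This is the key structural input; everything else is a matter of projecting this identity onto the tangent and normal bundles of $\Sigma$ using the Gauss and Weingarten formulas recalled in the Preliminaries.

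First I would fix a point of $\Sigma$ and a tangent vector $X$, and compute $\tilde\nabla_X \xi$ in two ways. On one hand it vanishes because $\xi$ is parallel. On the other hand, I substitute the decomposition \eqref{eq - decomposition1}, namely $\xi = T + \nu N$, and expand:
\begin{equation*}
0 = \tilde\nabla_X \xi = \tilde\nabla_X T + X(\nu)\,N + \nu\,\tilde\nabla_X N.
\end{equation*}
Next I rewrite each ambient-connection term in terms of data intrinsic to the immersion. Since $T$ is tangent to $\Sigma$, the Gauss formula gives $\tilde\nabla_X T = \nabla_X T + h(X,T)N$; and the Weingarten formula gives $\tilde\nabla_X N = -SX$. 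Substituting these yields
\begin{equation*}
0 = \nabla_X T + h(X,T)\,N + X(\nu)\,N - \nu\, SX.
\end{equation*}

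The final step is to separate this single equation into its tangential and normal components relative to $\Sigma$. The terms $\nabla_X T$ and $-\nu\,SX$ are tangent to $\Sigma$, while $h(X,T)N$ and $X(\nu)N$ are normal. Since the tangential and normal parts must each vanish independently, the tangential part gives $\nabla_X T - \nu\,SX = 0$, that is $\nabla_X T = \nu SX$, and the normal part gives $\big(h(X,T) + X(\nu)\big)N = 0$, hence $X(\nu) = -h(X,T)$, which are precisely the two claimed identities. I do not anticipate a genuine obstacle here, as the computation is short and mechanical; the only point requiring a little care is justifying that $\xi$ is parallel for the product metric (so that $\tilde\nabla_X\xi=0$ rather than merely $\xi$ being Killing), and keeping the tangent/normal bookkeeping straight when applying Gauss and Weingarten.
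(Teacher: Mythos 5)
Your proof is correct and follows exactly the same route as the paper: both use the parallelism of $\xi$ on the product $\M^n\times I$, expand $0=\tilde\nabla_X(T+\nu N)$ via the Gauss and Weingarten formulas, and read off the two identities from the tangential and normal components. No gaps; this matches the paper's argument line for line.
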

\begin{proof}
Denote by $\widetilde{\nabla}$ the Levi-Civita connection of $\M^n
\times I$. Since $\xi$ is a parallel vector field on $\M^n \times
I$, one has $\widetilde{\nabla}_X \xi=0$. Now let $X$ be tangent to
$\Sigma$. By the definitions of $T$ and $\nu$ and by using the
formulas of Gauss and Weingarten, it follows that
$$0 = \tilde\nabla_X \xi = \tilde\nabla_X (T + \nu N) = \nabla_X T + h(X,T)N + X(\nu)N - \nu SX.$$
The result follows by considering the tangent, resp. normal,
component of the above equation.
\end{proof}

If $\Sigma$ is totally umbilical in $\M^n \times I$, say $S =
\lambda \, \mathrm{id}$, then $T$ is a closed conformal field on
$\Sigma$. Indeed, in this case it follows from Lemma \ref{lem1}
above that $\nabla_X T = \nu \lambda X$ for all vector fields $X$
tangent to $\Sigma$. We shall now prove that if $\Sigma$ is
non-vertical and non-horizontal at some point $p$, we can use $T$ to
construct a local non-vanishing conformal field on $\M^n$.

\begin{proposition} \label{prop1}
Let $\Sigma$ be a totally umbilical hypersurface of  $\M^n\times I$,
which is neither vertical  nor horizontal at some point. Then the
canonical projection $\pi:\M^n\times I\to\M^n$ is locally a
diffeomorphism between an open neighborhood $U$ of this point in
$\Sigma$ and the open subset $\pi U$ of $\M^n$. Let $T$ be as above
and denote by $T_0$ be the projection of $T$ to $\pi U$, rescaled
such that it has the same length as $T$ again. Then $T_0$ is a
closed conformal field on $\pi U$.
\end{proposition}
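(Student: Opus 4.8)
The plan is to exploit the local graph representation of $\Sigma$ over $\M^n$ and to transfer the closed conformal equation $\nabla_X T = \nu\lambda X$ --- which holds on $\Sigma$ by the remark following Lemma \ref{lem1} --- across the projection $\pi$. The essential point is that $\pi|_\Sigma$ is \emph{not} an isometry onto $\pi U$ (the induced metric on a graph is a rank-one perturbation of the base metric), so one cannot simply push $T$ forward; the prescribed rescaling is exactly what compensates for this.

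First I would establish the local diffeomorphism. The kernel of $d\pi$ on $\M^n\times I$ is everywhere the vertical line spanned by $\xi$, so $d\pi_p$ restricted to $T_p\Sigma$ fails to be injective precisely when $\xi\in T_p\Sigma$, i.e. when $\nu(p)=0$. Since $\Sigma$ is non-vertical at $p$ we have $\nu(p)\neq0$, hence $d\pi_p|_{T_p\Sigma}$ is an isomorphism (both spaces having dimension $n$), and the inverse function theorem yields a neighborhood $U$ on which $\pi$ is a diffeomorphism onto $\pi U$. Shrinking $U$ if necessary I would write $\Sigma$ as a graph $t=u(x)$ over $\pi U$, so that a vector $Y\in T_x(\pi U)$ lifts to $\hat Y=Y+du(Y)\,\xi$ tangent to $\Sigma$. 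Since $\Sigma$ is also non-horizontal at $p$, one has $T(p)\neq 0$, so $W:=\mathrm{grad}_{\M}u$ does not vanish near $p$ and the rescaling is well defined.

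With $\nu=(1+|W|^2)^{-1/2}$ a short computation gives the unit normal $N=\nu(\xi-W)$, whence $d\pi(T)=\nu^{2}W$ and therefore, after rescaling to length $|T|$, the clean expression $T_0=\nu W$. Two ingredients then feed the final computation. First, because $T^\flat=\pi^{\ast}du$ on $\Sigma$, the second equation of Lemma \ref{lem1} together with $S=\lambda\,\mathrm{id}$ gives $X(\nu)=-\lambda\,du(d\pi X)$, that is $d\nu=-\lambda\,du$ on $\pi U$. Second, the graph formula $h(\hat X,\hat Y)=\nu\,\mathrm{Hess}\,u(X,Y)$, combined with the induced metric $g_\Sigma(\hat X,\hat Y)=g_{\M}(X,Y)+du(X)\,du(Y)$ and umbilicity $h=\lambda g_\Sigma$, yields $\nu\,\mathrm{Hess}\,u=\lambda\,(g_{\M}+du\otimes du)$.

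It then remains to expand $\nabla^{\M}_Y T_0=(Y\nu)\,W+\nu\,\mathrm{Hess}\,u(Y)$, where $\nabla^{\M}$ is the Levi-Civita connection of $\M^n$, and to substitute the two relations above: the term $(Y\nu)W=-\lambda\,g_{\M}(W,Y)\,W$ cancels exactly the rank-one contribution $\lambda\,g_{\M}(W,Y)\,W$ produced by $\mathrm{Hess}\,u$, leaving $\nabla^{\M}_Y T_0=\lambda\,Y$. This exhibits $T_0$ as a closed conformal field on $\pi U$ with conformal factor $\lambda$. I expect the main obstacle to be the second ingredient: one must carefully compute the second fundamental form of the graph in the product metric and track the non-metric $du\otimes du$ terms, for it is precisely the cancellation of these rank-one terms against $d\nu$ that makes the \emph{rescaled} projection closed conformal --- without the rescaling the argument breaks down.
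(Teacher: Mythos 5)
Your proof is correct, but it takes a genuinely different route from the paper's. You work with a local graph representation $t=u(x)$ of $\Sigma$ and reduce everything to classical graph formulas: the unit normal $N=\nu(\xi-W)$ with $W=\mathrm{grad}\,u$ and $\nu=(1+\|W\|^2)^{-1/2}$, the identity $T_0=\nu W$, the second fundamental form formula $h(\hat X,\hat Y)=\nu\,\mathrm{Hess}\,u(X,Y)$, and the relation $d\nu=-\lambda\,du$ coming from the second equation of Lemma \ref{lem1} together with umbilicity; the conformal equation $\nabla^{\M^n}_Y T_0=\lambda Y$ then follows from the cancellation of the rank-one terms, and all your intermediate identities check out ($T=(1-\nu^2)\xi+\nu^2 W$, $(d\pi)(T)=\nu^2 W$, $T^\flat=\pi^{*}du$, $\nu\,\mathrm{Hess}\,u=\lambda(g_{\M^n}+du\otimes du)$). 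The paper never introduces the graph function: it extends $T$, $N$, $\nu$ and $\lambda$ to $\pi U\times I$ by the flow of the Killing field $\xi$, observes that the horizontal lift of $T_0$ is $\widetilde{T_0}=\nu\xi-N$, so that $T_0$ is (up to sign) the projection of the unit normal, and then obtains $\nabla^{\M^n}_X T_0=-(d\pi)(\widetilde{\nabla}_{\widetilde X}N)=\lambda X$ in one stroke from the Weingarten equation combined with $\widetilde{\nabla}_{\xi}N=0$ (a consequence of $[\xi,N]=0$). Your approach buys elementarity and self-containedness --- no flow-extension or lift-and-project formalism, only explicit computations with $u$ --- at the cost of the Hessian bookkeeping whose cancellation you correctly identify as the crux. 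The paper's approach buys brevity and a conceptual explanation of why the prescribed rescaling is the right one: after rescaling, the lift of $T_0$ differs from $-N$ by a multiple of the parallel field $\xi$, so its covariant derivative is governed purely by the shape operator. One cosmetic point: non-horizontality is not actually needed for the rescaled field to be well defined on $U$, since $T_0=\frac{1}{\nu}(d\pi)(T)=\nu W$ extends smoothly across points where $T$ vanishes; only $\nu\neq 0$ matters there.
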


\begin{proof}
Let $\Sigma$ be a totally umbilical hypersurface of $\M^n\times I$.
Let $\xi$, $N$, $T$  and $\nu$ be as above and assume that the shape
operator associated to $N$ is $S = \lambda\,\mathrm{id}$. Suppose
$\Sigma$ is non-vertical and non-horizontal at some point, then it
is clear that there is an open neighborhood $U$ of this point in
$\Sigma$ where $\nu$ does not vanish and such that $\pi$ is a local
diffeomorphism between  $U$ and its image $\pi U$ in $\M^n$. First,
extend the vector fields $T$, $N$ and the functions $\nu$, $\lambda$
to the whole of $\pi U \times I$ by using the one-parameter group of
translations corresponding to the Killing field $\xi$ and denote
these again by $T$, $N$, $\nu$ and $\lambda$. Since $\nu$ and
$\lambda$ are constant on fibres of $\pi$, one can also view them as
functions on $\pi U$. Using these notations, the vector field $T_0$
on $\pi U$ is
$$ T_0 = (d\pi)(T) \frac{\|T\|}{\|(d\pi)(T)\|} = \frac{1}{\nu} (d\pi)(T) $$
and its horizontal lift to $\pi U \times I$ is
$$ \widetilde{T_0} = \nu \xi - N= \frac{\nu^2-1}{\nu}\xi + \frac{1}{\nu}T. $$
Remark that $T_0$ is, up to the sign, the projection of $N$ to $\pi
U$.

Now let $X$ be a vector field on $\pi U$ and denote by $\widetilde
X$ its horizontal lift. Then
\begin{equation} \label{eq - 1}
\nabla^{\M^n}_X T_0 = (d\pi)(\widetilde{\nabla}_{\widetilde X}
\widetilde{T_0}) = (d\pi)(\widetilde{\nabla}_{\widetilde X}(\nu \xi
- N)) = -(d\pi)(\widetilde{\nabla}_{\widetilde{X}}N)
\end{equation}
Let $Y$ be a local vector field on $\Sigma$ such that $(d\pi)(Y)=X$.
Denote the extension to $\pi U \times I$, using the flow of $\xi$,
again by $Y$. Then $\widetilde X = Y - \langle Y, \xi \rangle \xi$
and
\begin{equation} \label{eq - 2}
\widetilde{\nabla}_{\widetilde X}N = \widetilde{\nabla}_{Y - \langle
Y, \xi \rangle \xi}N = \widetilde{\nabla}_Y N - \langle Y, \xi
\rangle \widetilde{\nabla}_{\xi}N = - \lambda Y.
\end{equation}
Here we used that $[\xi,N]=0$ implies $\widetilde{\nabla}_{\xi} N =
\widetilde{\nabla}_N \xi = 0$. From \eqref{eq - 1} and \eqref{eq -
2}, we obtain $\nabla_X^{\M^n}T_0 = \lambda X$, which proves that
$T_0$ is indeed closed conformal.
\end{proof}

The fact that $\M^n$ admits a local closed conformal field,
determines locally its Riemannian structure, as shown by the
following known result (see \cite{M, RU} and the references therein).

\begin{proposition} \label{prop2}
Let $V$ be a local closed conformal field without zeros on a
Riemannian manifold $\M^n$, say $\nabla^{\M^n}_X V = f X$ for some
non-vanishing function  $f$ and for all vector fields $X$ on $\M^n$.
Then $\M^n$ has locally the structure of a warped product of an
interval of the Euclidean line with some $(n-1)$-dimensional
Riemannian manifold.
\end{proposition}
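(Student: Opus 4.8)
The plan is to produce coordinates adapted to $V$ in which the metric visibly takes the warped-product form $dt^2 + \phi(t)^2\bar g$. Write $\rho = \|V\|$, which is nowhere zero by hypothesis, and set $N = V/\rho$, the unit field in the direction of $V$. First I would record the behaviour of $\rho$: differentiating $u = \frac12\langle V,V\rangle$ and using $\nabla_X V = fX$ gives $X(u) = f\langle X,V\rangle = f\rho\langle X,N\rangle$, so $\nabla u = f\rho N$, and since $u = \frac12\rho^2$ this yields $\nabla\rho = fN$. In particular $N(\rho) = f$ while $X(\rho) = 0$ for every $X$ orthogonal to $N$. From this I would deduce that the integral curves of $N$ are geodesics, since
\[
\nabla_N N = \frac{1}{\rho}\nabla_N V - \frac{N(\rho)}{\rho^2}V = \frac{f}{\rho}N - \frac{f}{\rho^2}\,\rho N = 0.
\]

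Thus $N$ is a unit geodesic field orthogonal to the distribution $V^\perp$, which is integrable because the dual one-form $V^\flat$ is closed. I would then fix a leaf $L_0$ of this foliation through the given point and flow it along $N$, obtaining local coordinates $(t,y^1,\dots,y^{n-1})$ with $\partial_t = N$ and $L_0 = \{t=0\}$. Here $t$ is arc length along the geodesics, so $\langle\partial_t,\partial_t\rangle = 1$, and the standard computation using $\nabla_{\partial_t}\partial_t = 0$ and $[\partial_t,\partial_i]=0$ gives $\partial_t\langle\partial_t,\partial_i\rangle = \tfrac12\partial_i\langle\partial_t,\partial_t\rangle = 0$; since $\partial_t = N$ is orthogonal to $L_0$ along $t=0$, the mixed terms $\langle\partial_t,\partial_i\rangle$ vanish identically. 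Hence $g = dt^2 + g_t$, where $g_t$ is a $t$-dependent family of metrics on the slices $\{t=\mathrm{const}\}$.

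It remains to identify $g_t$ as $\phi(t)^2$ times a fixed metric, and this is the step I expect to carry the real content. Because $\nabla\rho = fN = f\partial_t$, one gets $\partial_i\rho = f\langle\partial_t,\partial_i\rangle = 0$, so $\rho$ is a function of $t$ alone and $f = N(\rho) = \partial_t\rho$ likewise. Differentiating the slice metric and using $\nabla_{\partial_i}\partial_t = \nabla_{\partial_i}N = \frac{1}{\rho}\nabla_{\partial_i}V = \frac{f}{\rho}\partial_i$ (the $V$-term drops because $\partial_i\rho = 0$) gives
\[
\partial_t\langle\partial_i,\partial_j\rangle = 2\,\frac{f}{\rho}\,\langle\partial_i,\partial_j\rangle = 2\,(\partial_t\log\rho)\,\langle\partial_i,\partial_j\rangle.
\]
Since the coefficient depends on $t$ only, integration in $t$ yields $\langle\partial_i,\partial_j\rangle(t,y) = \left(\rho(t)/\rho(0)\right)^2\langle\partial_i,\partial_j\rangle(0,y)$. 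Setting $\phi(t) = \rho(t)/\rho(0)$ and letting $\bar g$ be the metric induced on $L_0$, I obtain $g = dt^2 + \phi(t)^2\bar g$, which exhibits $\M^n$ locally as the warped product of an interval with the $(n-1)$-manifold $L_0$. The only genuine obstacle is the passage from the $t$-dependent family $g_t$ to a single warping function, and the identity $\partial_i\rho = 0$, forced by the vanishing of the mixed metric terms, is precisely what makes the warping factor depend on $t$ alone.
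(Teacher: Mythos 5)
Your proof is correct, but it takes a genuinely different route at the crucial step. Both arguments share the same scaffolding: integrability of the distribution $V^\perp$, coordinates built by flowing a leaf, and the observation that the slice metrics evolve conformally. The difference lies in how one shows that the conformal factor depends only on the flow parameter. The paper flows along $V$ itself, obtains $g = g_{11}(x_1)\,dx_1^2 + \exp\bigl(2\int f\,dx_1\bigr)\sum c_{ij}\,dx_i\,dx_j$, and then must prove $\partial_{x_j}f=0$; it does so by a curvature computation, exploiting the symmetry identity $\langle R(\partial_{x_1},\partial_{x_j})\partial_{x_1},\partial_{x_1}\rangle = 0$ to force $(\partial_{x_j}f)\,g_{11}=0$. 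You avoid curvature entirely: the identity $\nabla\rho = fN$ (with $\rho=\|V\|$, $N=V/\rho$) shows at once that $\rho$ is constant on the leaves and that $f = \partial_t\rho$ is a function of $t$ alone, which is exactly the fact the paper extracts from $R$. Your normalization to the unit field $N$ buys two further simplifications: the integral curves of $N$ are geodesics, so the block form $dt^2 + g_t$ comes out directly (the paper instead ends with $g_{11}(x_1)\,dx_1^2$ and needs a final change of the $x_1$-coordinate), and the warping function is identified geometrically as $\phi(t) = \rho(t)/\rho(0)$, i.e.\ the norm of $V$ itself up to scale. A minor bonus of your argument is that it never uses the hypothesis that $f$ is non-vanishing, so it covers the parallel case $f\equiv 0$ (a Riemannian product) uniformly. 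The paper's approach, for its part, is a compact coordinate computation whose curvature step is only two lines, but it is less transparent about what the warping function is.
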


\begin{proof}
One can check that the distribution orthogonal to $V$ is integrable
and hence one can find a local coordinate system $(x_1, \ldots,
x_n)$ on $\M^n$ such that $\partial_{x_1} = V$ and $\partial_{x_j}$
is orthogonal to $\partial_{x_1}$ for $j \geq 2$. With respect to
these coordinates, the metric on $\M^n$ takes the form
$$ g = g_{11}(x_1, \ldots, x_n) dx_1^2 + \sum_{i,j=2}^n g_{ij}(x_1, \ldots, x_n) dx_i dx_j. $$
It follows from a straightforward computation that
$\partial_{x_j}g_{11}=0$ for $j \geq 2$ and that $\partial_{x_1}
g_{ij} = 2 f g_{ij}$ for $i,j \geq 2$. Hence, one has
$$ g = g_{11}(x_1) dx_1^2 + \exp\left( 2 \int f \, dx_1 \right) \sum_{i,j=2}^n c_{ij}(x_2, \ldots, x_n) dx_i dx_j. $$
To conclude, we prove that $\partial_{x_j}f = 0$ for $j \geq 2$,
such that, after a change of the $x_1$-coordinate, the metric above
is indeed a warped product metric. To see this, let  $R$
be  the curvature tensor of $\M^n$
, then
\begin{align*}
0 &= \langle R(\partial_{x_1}, \partial_{x_j}) \partial_{x_1}, \partial_{x_1} \rangle = \langle \nabla^{\M^n}_{\partial_{x_1}} \nabla^{\M^n}_{\partial_{x_j}} \partial_{x_1} - \nabla^{\M^n}_{\partial_{x_j}} \nabla^{\M^n}_{\partial_{x_1}} \partial_{x_1} , \partial_{x_1} \rangle \\
&= \langle (\partial_{x_1}f)\partial_{x_j} - (\partial_{x_j}f)\partial_{x_1} , \partial_{x_1} \rangle = -(\partial_{x_j}f)g_{11}.
\end{align*}\end{proof}

\begin{remark}
The converse to Proposition \ref{prop2} is also true.
In a warped product $I\times_f M = (I \times M, dt^2 +f(t)^2 g_M)$, the field $f(t) {\partial_ t}$ is
closed conformal and  vanishes nowhere.
\end{remark}

We can now prove our main result in this section.

\begin{theorem}\label{theo1}
A Riemannian product space $\M^n\times I$ admits a totally umbilical
hypersurface $\Sigma$, which is neither vertical nor horizontal at
some point $(p,t)\in \M^n\times I$, if and only if $\M^n$ has in a neighborhood of  $p$ the structure of a
warped product of an interval of the Euclidean line with some
$(n-1)$-dimensional Riemannian manifold.

In particular, when $n=2,$ there exists a totally umbilical surface in $\M^2\times I,$ which is neither vertical nor horizontal at some point $(p,t)\in \M^2\times I,$ if and only if $\M^2$ admits a non zero Killing field in a neighborhood of $p.$ Moreover any 
such surface is invariant by a local one-parameter group of local isometries of  $\M^2\times I$ keeping the factor $I$ pointwise fixed. 
\end{theorem}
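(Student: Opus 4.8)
The plan is to establish the two directions of the main equivalence separately, and then specialize to the case $n=2$. For the forward direction, I would invoke the machinery already developed: if $\Sigma$ is totally umbilical and neither vertical nor horizontal at $(p,t)$, then Proposition \ref{prop1} produces a local closed conformal field $T_0$ on an open subset $\pi U$ of $\M^n$ containing $p$. The one remaining subtlety is that Proposition \ref{prop2} requires the conformal factor $f$ to be non-vanishing, so I would first check that $T_0$ can be taken without zeros near $p$; this follows because on the neighborhood $U$ the function $\nu$ does not vanish, so $T_0 = \tfrac{1}{\nu}(d\pi)(T)$ is well-defined and non-zero exactly where $T$ itself is non-zero, i.e.\ where $\Sigma$ is not orthogonal to $\xi$. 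Granting this, Proposition \ref{prop2} immediately yields the local warped product structure on $\M^n$ near $p$.

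For the converse, I would start from the assumed warped product structure $\M^n = I' \times_\varphi M^{n-1}$ near $p$ and use the Remark following Proposition \ref{prop2}: the field $V = \varphi(s)\,\partial_s$ is a local closed conformal field on $\M^n$ with nowhere-vanishing conformal factor. The task is then to reverse the construction of Proposition \ref{prop1}, i.e.\ to build a totally umbilical hypersurface $\Sigma$ in $\M^n \times I$ whose associated tangential projection recovers $V$. Concretely, I expect to realize $\Sigma$ as a graph over $\M^n$, writing it as the image of a map $(x,\,u(x))$ for a suitable function $u$ on $\pi U$, and to translate the closed conformal equation $\nabla^{\M^n}_X V = f X$ into a first-order condition on $u$ (governing the angle function $\nu$ and the gradient direction of $u$) that forces $S = \lambda\,\mathrm{id}$. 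The main obstacle here is the explicit verification that such a graph is genuinely umbilical: one must show that the single closed-conformal equation on $\M^n$ provides enough information to control all components of the shape operator $S$, not merely its restriction to the horizontal distribution. I anticipate that integrating the equation $X(\nu) = -h(X,T)$ from Lemma \ref{lem1} together with $\nabla_X T = \nu\lambda X$ gives the compatibility needed to close the argument, producing $\Sigma$ with the prescribed umbilicity.

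Finally, for the specialization to $n=2$, the key point is that a closed conformal field on a \emph{surface} is intimately related to a Killing field: in dimension two, rotating a closed conformal field $V$ by ninety degrees (applying the almost complex structure coming from the orientation and metric) yields a Killing field, and conversely. I would therefore argue that $\M^2$ admitting a local non-zero closed conformal field near $p$ is equivalent to $\M^2$ admitting a local non-zero Killing field near $p$, since the warped product structure $ds^2 + \varphi(s)^2 d\theta^2$ carries $\partial_\theta$ as an obvious Killing field. Combining this with the already-established equivalence then gives the stated characterization. For the last assertion about invariance, I would extend the $\M^2$-Killing field trivially to a Killing field on $\M^2 \times I$ (acting as the identity on the $I$-factor, hence keeping $I$ pointwise fixed), and show that the totally umbilical surface $\Sigma$ constructed above is invariant under its flow; this should follow from the fact that $\Sigma$ is, by construction, adapted to the warped-product coordinates in which this Killing field is a coordinate rotation. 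I expect the surface to be a surface of revolution in a suitable sense, so invariance under the one-parameter group is essentially built into the construction.
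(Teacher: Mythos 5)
Your necessity argument is the paper's own (Proposition \ref{prop1} followed by Proposition \ref{prop2}), but note that the hypothesis you set out to verify is the wrong one: checking that $T_0$ has no zeros near $p$ addresses the assumption that $V$ is non-vanishing, not the assumption that the conformal factor $f$ is non-vanishing. For your $T_0$ the conformal factor is the umbilicity function $\lambda$, which can perfectly well vanish (e.g.\ when $\Sigma$ is totally geodesic); the repair is to observe that the proof of Proposition \ref{prop2} never actually uses $f\neq 0$, only $V\neq 0$. The genuine gap, however, is in the sufficiency direction. You correctly identify the main obstacle---showing that your graph is umbilical in \emph{all} directions---and then merely ``anticipate'' that integrating the equations of Lemma \ref{lem1} closes the argument. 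Those equations are necessary conditions satisfied by a hypersurface already known to be totally umbilical; they do not produce one, and umbilicity of a general graph over $\M^n$ is a second-order PDE system, not a first-order condition. What is needed (and what the paper does) is an explicit construction: write the metric of $\M^n\times I$ as $dx_0^2+dx_1^2+f(x_1)^2 g_{M^{n-1}}$, take as ansatz the hypersurface swept out by a unit-speed plane curve $(x_0(s),x_1(s))$ with $x_0'=\sin\theta$, $x_1'=\cos\theta$, times the whole fibre $M^{n-1}$, and compute the shape operator: $\widetilde\nabla_{X_1}N=-\theta'(s)\,X_1$ in the curve direction and $\widetilde\nabla_{X_j}N=-\sin\theta(s)\,\tfrac{f'}{f}\,X_j$ in the fibre directions, with no mixed terms. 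Umbilicity is then the single scalar ODE $\theta'=\sin\theta\,\tfrac{f'}{f}$, which integrates to $x_1'=\pm\sqrt{1-c^2f(x_1)^2}$ and is locally solvable. Without this computation (or an equivalent verification) the existence of a non-vertical, non-horizontal totally umbilical hypersurface is simply not established.

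In the case $n=2$, your use of the rotation $J$ (closed conformal fields correspond under $J$ to Killing fields) is exactly the paper's, but your invariance argument falls short of the statement: you only show invariance of the surfaces produced by your construction, whereas the theorem asserts it for \emph{any} totally umbilical surface which is somewhere neither vertical nor horizontal. The paper handles an arbitrary such $\Sigma$ directly: by Lemma \ref{lem1} and umbilicity, $T$ is closed conformal on $\Sigma$, so $JT$ is a Killing field on $\Sigma$; moreover $JT$ is orthogonal to the fibres of $\pi$ and $(d\pi)(JT)=JT_0$ is a Killing field on $\M^2$. The extension of $JT_0$ to $\M^2\times I$ acting trivially on the factor $I$ is then a Killing field of the product which, along $\Sigma$, coincides with $JT$ (both are horizontal and project to $JT_0$), hence is tangent to $\Sigma$, so its local flow preserves $\Sigma$. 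To obtain the general statement along your route you would additionally need the classification fact that every such surface arises from your construction, which you have not proved.
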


\begin{proof}
It follows from Propositions \ref{prop1} and \ref{prop2} above that
$\M^n$ having  the structure of a warped product in a neighborhood of $p$ is a
necessary condition for $\M^n\times I$ to admit a totally umbilical
hypersurface which is non-vertical and non-horizontal at $(p,t).$

We shall now prove that this condition is also sufficient. Assume
that $\M^n = J \times_f M^{n-1}$, i.e., that the metric on
$\M^n$ can be written as
$$ g_{\M^n} = dx_1^2 + f(x_1)^2 g_{M^{n-1}}(x_2,\ldots,x_n). $$
Then the metric on $\M^n\times I$ can be written as
$$ g = dx_0^2 + dx_1^2 + f(x_1)^2 g_{M^{n-1}}(x_2,\ldots,x_n). $$

We know from above that a non-vertical and non-horizontal totally
umbilical hypersurface $\Sigma$ of $\M^n \times I$ should be tangent
to the distribution orthogonal to the vector fields $\partial_{x_0}$
and $\partial_{x_1}$ at any of its points. This means that $\Sigma$
is generated by a curve in the $(x_0,x_1)$-plane, say
$\alpha(s)=(x_0(s),x_1(s))$. Assume that $\alpha$ is parametrized by
arc length, then there exists a function $\theta$ such that
$$ x_0'(s) = \sin\theta(s), \quad x_1'(s) = \cos\theta(s). $$
In this case, the tangent space to $\Sigma$ is spanned by $X_1 =
\sin\theta(s) \partial_{x_0} + \cos\theta(s) \partial_{x_1}$, $X_2 =
\partial_{x_2}, \ldots, X_n = \partial_{x_n}$ and a unit normal to
$\Sigma$ is given by $N = \cos\theta(s) \partial_{x_0} -
\sin\theta(s) \partial_{x_1}$.

One can compute the Levi-Civita connection $\widetilde{\nabla}$ of
$\M^n\times I$ from the metric above to verify
$$ \widetilde{\nabla}_{X_1} N = -\theta'(s) X_1, \qquad \widetilde{\nabla}_{X_j} N = - \sin\theta(s)\frac{f'}{f}X_j $$
for every $j\geq 2$. Hence, $\Sigma$ is totally umbilical if and
only if
$$ \theta'(s) = \sin\theta(s) \frac{f'}{f}. $$

One can now use this equation to determine the functions $x_0(s)$
and $x_1(s)$. Indeed, we have
$$ x_1''(s) = -\sin\theta(s) \theta'(s) = -\sin^2\theta(s) \frac{f'(x_1(s))}{f(x_1(s))} = -(1-x_1'(s)^2) \frac{f'(x_1(s))}{f(x_1(s))}, $$
which yields after a first integration
$$ x_1'(s) = \pm \sqrt{1-c^2 f(x_1(s))^2} $$
for some real constant $c$. This ODE for $x_1(s)$ is, at least
locally, always solvable. The function $x_0(s)$ is then determined
by
$$ x_0(s) = \int \sqrt{1-x_1'(s)^2} \, ds = \int c f(x_1(s)) \, ds. $$
Thus there does always exist a non-vertical and non-horizontal
totally umbilical hypersurface of $\M^n\times I$ if $\M^n$ is
locally isometric to the warped product described above. A
parametrization for such a totally umbilical hypersurface $\Sigma$
is $\varphi(s,u_1,\ldots,u_{n-1}) = (x_0(s) , x_1(s), u_1, \ldots,
u_{n-1}).$

In the particular case when $n=2$, observe the following general fact that can be checked straightforwardly. Let $J$ denote the rotation over
90 degrees of an oriented Riemannian surface $M^2$, which is locally well-defined on any Riemannian surface $M^2$. Then a vector field $X$
on $M^2$ is closed conformal if and only if $JX$ is Killing. Hence, if $\Sigma$ is a totally umbilical surface in $\M^2 \times I$, then $JT$ is a Killing field on $\Sigma$. Moreover, $JT$ is orthogonal to the fibers of $\pi$ and $(d\pi)(JT)=JT_0$ is a Killing field on $\M^2$. This implies the result. 

\end{proof}

In  particular, from Theorem \ref{theo1}, we recover the
classification of totally umbilic surfaces in $\S^2\times \R$ and
$\H^2\times \R$ obtained in \cite{ST2} and \cite{VdV}.

\begin{remark}
Theorem \ref{theo1} also gives a classification of totally umbilical
hypersurfaces in Riemannian product spaces of type $\M^n \times I$.
\end{remark}

\begin{remark}
As a further particular case, it is interesting to observe that the
results above provide a new proof for the classification of totally
umbilic hypersurfaces in the Euclidean space $\R^{n+1}$ since the
latter can be viewed (in various ways) as a product $\R^n\times \R.$
The present proof has the advantage to work assuming only a $C^2$
regularity for the hypersurfaces. It can indeed easily be checked
that $C^2$ regularity for the hypersurfaces is enough in the above
results. The standard proof of the classification of totally umbilic
hypersurfaces in $\R^{n+1}$ works for hypersurfaces with at least
$C^3$ regularity. It is known this classification also holds for
$C^2$-hypersurfaces (see \cite{Pa} and \cite{ST1}).
\end{remark}

\begin{corollary} \label{cor1}
A Riemannian warped product space $I \times_f \M^n$ admits a totally
umbilical hypersurface $\Sigma$, which is neither vertical nor
horizontal at some point $(t,p)\in I \times_f \M^n,$ if and only if  $\M^n$ has in a neighborhood of $p$ the
structure of a warped product of an interval of the Euclidean line
with some $(n-1)$-dimensional Riemannian manifold.
\end{corollary}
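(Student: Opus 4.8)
The plan is to reduce the statement to Theorem \ref{theo1} by exhibiting the warped product $I \times_f \M^n$ as, up to a reparametrization of the base interval, a \emph{conformal} rescaling of a Riemannian product of the type already treated, and then exploiting the conformal invariance of total umbilicity. So nothing new needs to be computed about the ambient geometry; the work is a change of coordinates plus an invariance principle.

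First I would reparametrize the base interval. Writing the warped metric as $g = dt^2 + f(t)^2 g_{\M^n}$ and introducing the new variable $s$ with $ds = dt/f(t)$ (a local diffeomorphism of the interval since $f>0$), the metric becomes $g = f^2\,(ds^2 + g_{\M^n})$. Thus $g = f^2\,\hat g$, where $\hat g = ds^2 + g_{\M^n}$ is precisely a Riemannian product metric of the form covered by Theorem \ref{theo1}, namely $\M^n \times \hat I$ with $\hat I$ the $s$-interval. In particular $g$ and $\hat g$ are conformal, with nowhere-vanishing conformal factor $f^2$, and the fibre metric $g_{\M^n}$ is literally the same in both.

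Next I would invoke the conformal invariance of total umbilicity. For a conformal change $\bar g = e^{2\phi}g$, a short computation with the standard relation between the two Levi-Civita connections shows that the shape operators of a fixed hypersurface satisfy $\bar S = e^{-\phi}\bigl(S - N(\phi)\,\mathrm{id}\bigr)$; hence $S$ is a multiple of the identity if and only if $\bar S$ is. Consequently a hypersurface $\Sigma \subset I \times \M^n$ is totally umbilical for the warped metric $g$ if and only if it is totally umbilical for the product metric $\hat g$. Moreover, since conformal maps preserve angles and the reparametrization $t \mapsto s$ carries the direction $\partial_t$ to the direction $\partial_s$, being tangent, resp. orthogonal, to this distinguished direction is unaffected, so $\Sigma$ is neither vertical nor horizontal at $(t,p)$ for $g$ exactly when the same holds for $\hat g$ at the corresponding point. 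Applying Theorem \ref{theo1} to $(I \times \M^n, \hat g) = \M^n \times \hat I$ then yields the stated equivalence, and because $g_{\M^n}$ is unchanged the intrinsic conclusion about $\M^n$ near $p$ transfers verbatim.

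The main obstacle is not analytic but a matter of careful bookkeeping: one must confirm that the reparametrization $s = \int dt/f$ is a genuine local diffeomorphism identifying $I \times_f \M^n$ and $\M^n \times \hat I$ as smooth manifolds with matching neighborhoods of $p$, and that the notions \emph{vertical} and \emph{horizontal at} $(t,p)$ in the warped picture correspond correctly to their counterparts in the product picture under this identification. Once these correspondences are pinned down, the equivalence follows immediately from Theorem \ref{theo1}.
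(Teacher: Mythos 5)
Your proposal is correct and follows essentially the same route as the paper's own proof: reparametrize the base by $s=\int dt/f$ to exhibit the warped metric as a conformal rescaling $f^2(ds^2+g_{\M^n})$ of the product metric, then invoke the conformal invariance of total umbilicity (your formula $\bar S = e^{-\phi}\bigl(S - N(\phi)\,\mathrm{id}\bigr)$ is the standard one and is valid) and apply Theorem \ref{theo1}. The only difference is that you spell out the shape-operator transformation and the angle-preservation bookkeeping, which the paper leaves as a ``known fact.''
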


\begin{proof}
First observe that a Riemannian warped product space $I \times_f
\M^n$ is conformal to a Riemannian product $J\times \M^n$, for some
open interval $J$ of $\R.$  Indeed, with some abuse of notation, the
metric on $I \times_f \M^n$ writes
$$g = dt^2 + f(t)^2 g_{\M^n} = f(t)^2 \left(\frac{dt^2}{f(t)^2} +  g_{\M^n} \right). $$
Choosing a new parameter $s=\psi(t)=\int \! \frac{dt}{f(t)}$ and
introducing the function $h(s)= f(\psi^{-1}(s)),$ we can write:
$$g = h(s)^2 (ds^2 + g_{\M^n})$$
as desired.

Now the claim follows from Theorem \ref{theo1} and the known fact
that totally umbilic hypersurfaces are preserved under conformal
diffeomorphisms between  the ambient manifolds.
\end{proof}

%%%%%%%%%%%%%%%%%%%%%%%%%%%%%%%%%%%%%%%%%%%%%%%%%%%%%%%%%%%%%%%%%%%%%%%%%%%%%%%%%%%%%%%%%%%%%%%%%%%%%%%%%%%%%%%%%%%%%%%

\section{Totally geodesic surfaces in a three-dimensional space \\ admitting a unit Killing field}

In this section we will characterize locally the Riemannian three-manifolds admitting a unit Killing field
which possess totally geodesic surfaces. A good reference on manifolds admitting a Killing field of constant 
length is given by \cite{BN}.

We start with a result which is valid in all dimensions. Let $M$
denote a Riemannian manifold which admits a unit Killing field
$\xi$.  The product manifolds $\M^n\times I$ considered in Section
\ref{sec - product} are  a particular case. Denote by
$\widetilde{\nabla}$ the Levi-Civita connection of $M.$ Let $\Sigma$
be a hypersurface in $M$ with unit normal $N$. Then we can, as in
the previous section, define a vector field $T$ and a real-valued
function $\nu$ on $\Sigma$ by the orthogonal decomposition
\begin{equation} \label{eq - decomposition2}
\xi = T + \nu N.
\end{equation}

The following result is a key fact for our purposes:

\begin{proposition} \label{prop3}
Let $\Sigma$ be a totally geodesic hypersurface in a Riemannian
manifold $M$ admitting a unit Killing field $\xi.$ Suppose $\xi$ is
not tangent to $\Sigma$ at some point. Then one can extend the
vector field $T$ to a neighborhood of this point in $M$ using the
local flow of $\xi$. If one denotes the resulting vector field again
by $T$, then $T$ is a local Killing field on $M$.
\end{proposition}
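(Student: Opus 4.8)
The plan is to verify the Killing equation $\mathcal L_T g = 0$ for the flow-extended field $T$, and the key structural observation is that it suffices to check this along $\Sigma$ itself. First I would record that extending $T$ by the local flow $\phi_t$ of $\xi$ is exactly the condition $[\xi,T]=0$. Since $\xi$ is Killing, $\mathcal L_\xi g = 0$, and from the commutator identity $\mathcal L_\xi\mathcal L_T - \mathcal L_T\mathcal L_\xi = \mathcal L_{[\xi,T]}$ together with $[\xi,T]=0$ we obtain $\mathcal L_\xi(\mathcal L_T g)=0$. Thus the symmetric tensor $\mathcal L_T g$ is invariant under $\phi_t$; because $\xi$ is transverse to $\Sigma$ at the given point, the orbits issuing from $\Sigma$ fill a neighborhood, so it is enough to prove $(\mathcal L_T g)_p = 0$ for every $p\in\Sigma$.

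At a point $p\in\Sigma$ I would split $T_pM = T_p\Sigma\oplus\R N$ and evaluate $(\mathcal L_T g)(A,B)=g(\widetilde\nabla_A T,B)+g(\widetilde\nabla_B T,A)$ on the pairs $(X,Y)$, $(X,N)$ and $(N,N)$ with $X,Y$ tangent. Three facts get used repeatedly: total geodesy (so $h=0$, whence $\widetilde\nabla_X Y=\nabla_X Y$ and $\widetilde\nabla_X N=0$ for tangent $X,Y$); the Killing property of $\xi$, i.e. skewness of $A\mapsto\widetilde\nabla_A\xi$; and $|\xi|=1$, which yields $|T|^2=1-\nu^2$ and $g(\widetilde\nabla_A\xi,\xi)=0$ for all $A$. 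The tangential case is immediate: from $\widetilde\nabla_X\xi = \nabla_X T + X(\nu)N$, the skewness of $\widetilde\nabla\xi$ restricts to $g(\nabla_X T,Y)+g(\nabla_Y T,X)=0$, so $T$ is already Killing for the induced metric on $\Sigma$.

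The two cases involving $N$ are where the flow-extension enters, and I expect this to be the main obstacle: computing $\widetilde\nabla_N T$ requires information living off $\Sigma$. The device is to write $N=\nu^{-1}(\xi-T)$ and use $[\xi,T]=0$, i.e. $\widetilde\nabla_\xi T=\widetilde\nabla_T\xi$, to convert normal derivatives into tangential ones, giving $\widetilde\nabla_N T=\nu^{-1}(\widetilde\nabla_T\xi-\widetilde\nabla_T T)$. For the mixed pair $(X,N)$, the term $g(\widetilde\nabla_X T,N)$ vanishes since $\widetilde\nabla_X T=\nabla_X T$ is tangent, while $g(\widetilde\nabla_N T,X)$ unwinds, using skewness and $g(\nabla_X T,T)=\tfrac12 X(|T|^2)=-\nu X(\nu)$, to $\nu^{-1}(\nu X(\nu)-\nu X(\nu))=0$. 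For $(N,N)$ the $\widetilde\nabla_T T$ term is tangent and drops, leaving $g(\widetilde\nabla_N T,N)=\nu^{-1}g(\widetilde\nabla_T\xi,N)=-\nu^{-1}g(\widetilde\nabla_N\xi,T)$; expanding $\widetilde\nabla_N\xi$ once more through $N=\nu^{-1}(\xi-T)$ and invoking $g(\widetilde\nabla_T\xi,T)=0$, $g(\widetilde\nabla_\xi\xi,\xi)=0$ and $g(\widetilde\nabla_N\xi,\xi)=0$ forces total cancellation. With $(\mathcal L_T g)_p=0$ for all $p\in\Sigma$, the flow-invariance from the first paragraph propagates the vanishing to the whole neighborhood, so $T$ is a local Killing field on $M$.
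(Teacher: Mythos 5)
Your proof is correct. The computational heart of your argument coincides with the paper's: both exploit the flow-extension through $[\xi,T]=0$, both convert the problematic normal derivative into tangential ones via $N=\nu^{-1}(\xi-T)$ and $\widetilde\nabla_\xi T=\widetilde\nabla_T\xi$, and both feed in total geodesy and the skewness of $A\mapsto\widetilde\nabla_A\xi$. Where you genuinely diverge is in how the Killing equation is propagated off $\Sigma$. The paper notes that, since the flow maps $\phi_t$ are isometries, the images $\phi_t(\Sigma)$ foliate a neighborhood by \emph{totally geodesic} hypersurfaces; the identities $\widetilde\nabla_X T=\widetilde\nabla_X\xi-X(\nu)N$ (for $X$ tangent to the leaves), $T(\nu)=0$, and hence the clean intermediate statement $\widetilde\nabla_N T=0$ then hold at every point of the neighborhood, after which the Killing equation is immediate everywhere at once. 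You instead invoke the commutator identity $[\mathcal{L}_\xi,\mathcal{L}_T]=\mathcal{L}_{[\xi,T]}$ to get $\mathcal{L}_\xi(\mathcal{L}_T g)=0$, so that $\mathcal{L}_T g$ is $\phi_t$-invariant and its vanishing need only be checked along the original hypersurface $\Sigma$; your pointwise verification there, split over the pairs $(X,Y)$, $(X,N)$, $(N,N)$, is then essentially the paper's leafwise computation restricted to one leaf (indeed your mixed and normal cases together show $\widetilde\nabla_N T=0$ along $\Sigma$, which the paper states explicitly). Both mechanisms rest on the same fact, that the flow of $\xi$ consists of isometries: yours packages it as a tensor-calculus reduction that keeps every computation on $\Sigma$ itself, while the paper's packaging yields the stronger identity $\widetilde\nabla_N T\equiv 0$ on a whole neighborhood and avoids any appeal to Lie-derivative formalism.
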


\begin{proof}
Since $\xi$ is transversal to $\Sigma$ in a neighborhood of the
point, using the local flow $(\phi_t)_{t\in I}$ of $\xi$ we obtain a
foliation $\mathcal F$ of an open subset of $M$ by the totally
geodesic hypersurfaces $\phi_t(\Sigma).$ In this way we have local
extensions of the  fields $T$ and $N$ and the function $\nu$. Denote
these extensions again by $T$, $N$ and $\nu$. Note that (\ref{eq -
decomposition2}) is again valid for these extensions.

We have to verify that $\langle \widetilde{\nabla}_X T , Y \rangle +
\langle X , \widetilde{\nabla}_Y T \rangle = 0 $ for all vector
fields $X$ and $Y$.

First we note that
\begin{equation} \label{eq - derivative}
T(\nu)= 0.
\end{equation}
Indeed, since the hypersurfaces $\phi_t(\Sigma)$ are totally
geodesic and $\xi$ is a unit Killing field, we have
$$ T(\nu)= T(\langle \xi, N\rangle) = \langle \widetilde{\nabla}_T \xi, N\rangle = \langle \widetilde{\nabla}_T \xi, \frac{1}{\nu} (T-\xi)\rangle = 0. $$

Consider now a vector field $X$ which is tangent to the leaves of
the foliation $\mathcal F.$ Then
\begin{equation}\label{eq - tangent}
\widetilde{\nabla}_{X} T = \widetilde{\nabla}_{X} \xi - X(\nu) N.
\end{equation}
Furthermore,
$$ \widetilde{\nabla}_{N} T =\widetilde{\nabla}_{\frac{1}{\nu} (\xi-T)} T=  \frac{1}{\nu} (\widetilde{\nabla}_{T}\xi- \widetilde{\nabla}_{T} T ). $$
Using (\ref{eq - tangent}) and (\ref{eq - derivative}) we get
\begin{equation}\label{eq - normal}
\widetilde{\nabla}_{N} T = 0.
\end{equation}

Using (\ref{eq - tangent}) and (\ref{eq - normal}), it is easy to
check that $T$ is Killing.
\end{proof}

We now particularize to the case where the ambient manifold is three-dimensional.
Well-known examples of such manifolds are Riemannian products of type $\M^2 \times \R$ and also the unit three-sphere $\S^3$, Berger spheres and the Thurston spaces $\widetilde{\mathrm{SL}}(2,\R)$ and $\mathrm{Nil}_3$. In the next section we will describe more  such spaces.
We first prove an important basic formula.

\begin{lemma} \label{lem2}
Let $M^3$ be an oriented Riemannian manifold carrying a unit Killing
field $\xi$. Denote by $\widetilde{\nabla}$ the Levi-Civita
connection of $M^3$ and by $\times$ its cross product. Then there
exists a real-valued function $\tau$ on $M^3$, with $\xi(\tau) = 0$,
such that
$$ \widetilde{\nabla}_X \xi = \tau (X \times \xi) $$
for all vector fields $X$ on $M^3$.
\end{lemma}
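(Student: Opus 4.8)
The plan is to exploit the fact that a unit Killing field $\xi$ has the property that $\widetilde{\nabla}_X \xi$ is skew-symmetric in $X$ and, crucially, that $\widetilde{\nabla}_\xi \xi = 0$ because $\xi$ has constant length one. First I would establish $\widetilde{\nabla}_\xi \xi = 0$: differentiating $\langle \xi, \xi\rangle = 1$ gives $\langle \widetilde{\nabla}_X \xi, \xi\rangle = 0$ for all $X$, so $\xi$ is everywhere orthogonal to $\widetilde{\nabla}_X\xi$; taking $X = \xi$ and using the Killing equation $\langle \widetilde{\nabla}_\xi \xi, \xi\rangle = 0$ together with skew-symmetry forces $\widetilde{\nabla}_\xi \xi = 0$. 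So the $(1,1)$-tensor $A$ defined by $AX = \widetilde{\nabla}_X \xi$ is skew-symmetric and annihilates $\xi$.

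Next I would use three-dimensionality. A skew-symmetric endomorphism $A$ of the tangent space at each point of an oriented Riemannian $3$-manifold is, by a standard pointwise linear-algebra fact, given by the cross product with a unique vector: there exists a vector $W$ with $AX = W \times X$ for all $X$. Since $A$ annihilates $\xi$, we have $W \times \xi = 0$, which means $W$ is a multiple of $\xi$, say $W = -\tau \xi$ for some function $\tau$ (the sign is chosen to match the statement). Then $AX = W \times X = -\tau\, \xi \times X = \tau\, (X \times \xi)$, which is exactly the claimed formula $\widetilde{\nabla}_X \xi = \tau (X \times \xi)$.

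It remains to show $\xi(\tau) = 0$, and this is the step I expect to be the main obstacle, since it requires more than pointwise linear algebra. I would compute $\widetilde{\nabla}_\xi (\widetilde{\nabla}_X \xi)$ in two ways, or equivalently differentiate the identity $\widetilde{\nabla}_X \xi = \tau (X \times \xi)$ along $\xi$. Applying $\widetilde{\nabla}_\xi$ to both sides, using that the cross product is parallel and that $\widetilde{\nabla}_\xi \xi = 0$, yields on the right-hand side $\xi(\tau)(X \times \xi) + \tau\,(\widetilde{\nabla}_\xi X) \times \xi$. The left-hand side can be rearranged using $[\xi, X] = \widetilde{\nabla}_\xi X - \widetilde{\nabla}_X \xi$ and the fact that the flow of $\xi$ preserves the metric; after isolating terms one picks out the coefficient of $X \times \xi$ and extracts $\xi(\tau) = 0$. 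A cleaner route is to test the formula against $\xi$ itself: since $\widetilde{\nabla}_\xi \xi = 0 = \tau(\xi \times \xi)$ is automatic, I would instead contract with a suitable vector and use the second Bianchi identity or the standard Killing field curvature relation $\widetilde{\nabla}_X \widetilde{\nabla}_Y \xi - \widetilde{\nabla}_{\widetilde{\nabla}_X Y}\xi = R(X,\xi)Y$.

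Alternatively, and most efficiently, I would observe that $\tau = \langle \widetilde{\nabla}_X \xi, X \times \xi\rangle / \|X\times\xi\|^2$ for any $X$ not parallel to $\xi$, or simply that $2\tau$ equals the value $\langle \widetilde{\nabla}_{e_1}\xi, e_2\rangle - \langle \widetilde{\nabla}_{e_2}\xi, e_1\rangle$ in a local orthonormal frame $\{e_1, e_2, \xi\}$. One then differentiates this expression along $\xi$, using the Killing curvature identity to replace second covariant derivatives, and the symmetry $\langle R(X,\xi)\xi, Y\rangle = \langle R(Y,\xi)\xi, X\rangle$ together with $\widetilde{\nabla}_\xi\xi = 0$ makes the relevant terms cancel, giving $\xi(\tau)=0$. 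I expect the bookkeeping here — keeping track of which frame derivatives survive — to be where any real work lies, but the geometric input is just constancy of $\|\xi\|$ and the Killing property.
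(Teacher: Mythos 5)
Your proof is correct, and while the first half matches the paper in substance, your argument for $\xi(\tau)=0$ takes a genuinely different route. For the existence of $\tau$, the paper argues directly that $\widetilde{\nabla}_X\xi$ is orthogonal to both $X$ and $\xi$, hence proportional to $X\times\xi$, and then invokes linearity of $X\mapsto\widetilde{\nabla}_X\xi$ to see the proportionality factor is independent of $X$; your packaging via the correspondence between skew-symmetric endomorphisms of an oriented $3$-space and cross products with a fixed vector $W$, followed by $W\times\xi=0\Rightarrow W\parallel\xi$, is the same idea but handles the linearity point more systematically. The real divergence is in proving $\xi(\tau)=0$: the paper uses no curvature at all, observing that the local flow $(\phi_t)$ of $\xi$ consists of isometries preserving $\xi$ and the orientation, hence preserving the defining identity, so that $\tau\circ\phi_t=\tau$ and $\tau$ is constant along the flow. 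You instead differentiate the identity $\widetilde{\nabla}_X\xi=\tau(X\times\xi)$ along $\xi$ and invoke the Killing curvature relation $\widetilde{\nabla}_X\widetilde{\nabla}_Y\xi-\widetilde{\nabla}_{\widetilde{\nabla}_XY}\xi=R(X,\xi)Y$; with $X=\xi$ this gives $\widetilde{\nabla}_\xi\widetilde{\nabla}_X\xi=\widetilde{\nabla}_{\widetilde{\nabla}_\xi X}\xi=\tau(\widetilde{\nabla}_\xi X\times\xi)$, while the right-hand side differentiates (using parallelism of $\times$ and $\widetilde{\nabla}_\xi\xi=0$) to $\xi(\tau)(X\times\xi)+\tau(\widetilde{\nabla}_\xi X\times\xi)$, so $\xi(\tau)(X\times\xi)=0$ and choosing $X$ transverse to $\xi$ finishes; note that only $R(\xi,\xi)=0$ is needed, so your appeals to the second Bianchi identity and to the symmetry of $\langle R(\cdot,\xi)\xi,\cdot\rangle$ are unnecessary detours. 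What each approach buys: the paper's flow argument is shorter and purely conceptual (invariance under a symmetry group), whereas yours is a self-contained tensor computation that generalizes readily to situations where one differentiates such structure equations along a Killing direction, at the cost of quoting the second-derivative identity for Killing fields.
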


\begin{proof}
It is clear that $\widetilde{\nabla}_X \xi$ is perpendicular to $\xi$ and $X$ since $\xi$ is a unit Killing field.
Because the space is three-dimensional, we obtain that $\widetilde{\nabla}_X \xi = \tau(X) (X \times \xi)$ for some real number $\tau(X)$.

Since the mapping $T_p M^3 \to T_p M^3 : X \mapsto
\widetilde{\nabla}_X \xi = \tau(X) (X \times \xi)$ must be linear
for every point $p$, it is easily seen that $\tau$ can only depend
on the choice of $p \in M^3$ and not on the choice of $X \in
T_pM^3$. Hence $\tau$ is a real-valued function on
$M^3$.

To see that this function satisfies $\xi(\tau) = 0$, let
$(\phi_t)_{t\in I}$ be the local flow of $\xi$ as above.
Then $\widetilde{\nabla}_{(d\phi_t)X} (d\phi_t)\xi =
(d\phi_t)(\widetilde{\nabla}_X \xi)$, or, equivalently,
$\tau(\phi_t(p)) ((d\phi_t)X \times \xi) = \tau(p)
((d\phi_t)X \times \xi)$ for every parameter $t$ and for every
$p\in M^3$ and $X \in T_p M^3$. We conclude that
$\tau(\phi_t(p))= \tau(p)$ and hence that $\xi(\tau) = 0$.
\end{proof}

The first main result in this section is the following.

\begin{theorem} \label{theo2}
Let $M$ be a Riemannian three-manifold carrying a unit Killing field
$\xi$ and $p$ a point in $M.$ Then
\begin{itemize}
\item [(1)]
$M$ admits a totally geodesic surface  passing through $p$ which is  everywhere orthogonal to $\xi$  if and only if  $M$ has in a neighborhood of $p$ a Riemannian product structure $\Sigma\times I$ of some surface $\Sigma$ with an interval $I$ and  $\xi$ is tangent to the factor $I.$
\item [(2)]
$M$ admits a totally geodesic surface passing through $p$ which is everywhere tangent to $\xi$ if and only if there exists a geodesic through $p$ 
in $M$ on which $\tau$ vanishes and which is orthogonal to $\xi$ at $p.$
\item [(3)] The following three assertions are equivalent.
\begin{itemize}
\item[(i)] $M$ admits a totally geodesic surface passing through $p$ which
is neither orthogonal nor tangent to $\xi$ at $p.$
\item[(ii)] There is in a neighborhood of $p$ in  $M$ an orthogonal decomposition $\xi = X_1 +
X_2$ of $\xi$, where $X_1$ and $X_2$ are  Killing fields without zeros
that commute.
\item[(iii)] There exist local coordinates $(x,y,z)$ around $p$ in $M$ 
with $\xi = \partial_y + \partial_z$, such that the metric
takes the form
$$ g = dx^2 + \sin^2\theta(x) dy^2 + \cos^2\theta(x) dz^2. $$
\end{itemize}
\end{itemize}

\end{theorem}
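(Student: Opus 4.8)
The plan is to prove Theorem \ref{theo2} by treating the three cases separately, relying heavily on Proposition \ref{prop3} and Lemma \ref{lem2}. For each case I would establish the ``only if'' direction by extracting structure from a hypothetical totally geodesic surface, and the ``if'' direction by explicitly constructing one.

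For part (1), I would start with a totally geodesic surface $\Sigma$ orthogonal to $\xi$ at $p$. Since $\xi$ is orthogonal to $\Sigma$ at $p$, the decomposition \eqref{eq - decomposition2} gives $T=0$ and $\nu=\pm 1$ at $p$, so $\xi$ is a unit normal to $\Sigma$. By Proposition \ref{prop3} (taking the normal extension via the flow of $\xi$), the condition $\widetilde{\nabla}_N T = 0$ together with $T \equiv 0$ shows that $\xi$ stays normal along the foliation $\phi_t(\Sigma)$, i.e.\ the surface remains orthogonal to $\xi$ everywhere in a neighborhood. Then the shape operator associated to $N=\xi$ vanishes because $\Sigma$ is totally geodesic, and Lemma \ref{lem1} (or the key computation that $\widetilde{\nabla}_X\xi$ is normal) forces $\widetilde{\nabla}_X\xi = 0$ for $X$ tangent, so $\xi$ is parallel; by Lemma \ref{lem2} this means $\tau=0$ along the relevant directions and the metric splits locally as a Riemannian product $\Sigma\times I$ with $\xi$ tangent to $I$. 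The converse is immediate: in such a product the slices $\Sigma\times\{t_0\}$ are totally geodesic and orthogonal to $\xi$.

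For part (2), if $\Sigma$ is totally geodesic and everywhere tangent to $\xi$, then $\nu=0$ and $T=\xi$ along $\Sigma$, and any curve in $\Sigma$ orthogonal to $\xi$ at $p$ is a geodesic of $M$ (since $\Sigma$ is totally geodesic); I would compute using Lemma \ref{lem2} that along such a curve the surface being both $\xi$-invariant and totally geodesic forces $\tau=0$. The ``if'' direction is the constructive heart: given a geodesic $\gamma$ through $p$ with $\tau|_\gamma=0$ and $\gamma'(0)\perp\xi(p)$, I would build $\Sigma$ as the union of the flow lines of $\xi$ emanating from $\gamma$, that is $\Sigma = \{\phi_t(\gamma(s))\}$, and verify via Lemma \ref{lem2} and the geodesic equation that this ruled surface is totally geodesic; vanishing of $\tau$ is exactly what makes the $\xi$-direction and the $\gamma$-direction generate a totally geodesic tangent plane.

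Part (3) is where I expect the main obstacle, and I would prove the cycle (i)$\Rightarrow$(ii)$\Rightarrow$(iii)$\Rightarrow$(i). For (i)$\Rightarrow$(ii): given a totally geodesic $\Sigma$ oblique to $\xi$ at $p$, Proposition \ref{prop3} extends $T$ to a local Killing field; setting $X_1 = T$ (the tangential part, extended) and $X_2 = \nu N$ (the normal part) gives $\xi = X_1 + X_2$, and I must show $X_2$ is also Killing, that both are nonvanishing near $p$ (guaranteed since $\Sigma$ is neither tangent nor orthogonal to $\xi$ at $p$, so $T(p)\neq 0$ and $\nu(p)\neq 0$), and that they commute. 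Since $\xi$ is Killing and $X_1$ is Killing, $X_2 = \xi - X_1$ is automatically Killing; for commutativity I would use $\widetilde{\nabla}_N T = 0$ from \eqref{eq - normal} together with $T(\nu)=0$ from \eqref{eq - derivative} to compute $[X_1,X_2]=0$ directly. The hard part is (ii)$\Rightarrow$(iii): given two commuting nonvanishing Killing fields $X_1,X_2$ summing to the unit field $\xi$, I would use their commuting flows to produce coordinates $y,z$ along which $X_1=\partial_y$ and $X_2=\partial_z$, introduce an arclength coordinate $x$ along the orthogonal direction, and then show using the Killing equations and $|\xi|=1$ that the metric coefficients depend only on $x$ and take the stated form $\sin^2\theta(x)\,dy^2 + \cos^2\theta(x)\,dz^2$ (the constraint $|X_1|^2+|X_2|^2=|\xi|^2=1$, since $X_1\perp X_2$, is what produces the $\sin^2+\cos^2$ parametrization). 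Finally (iii)$\Rightarrow$(i) is a direct verification that, in this explicit metric, a suitable surface of the form traced by fixing a linear relation between $y$ and $z$ while letting $x$ vary is totally geodesic and oblique to $\xi$, analogous to the explicit construction carried out in the proof of Theorem \ref{theo1}.
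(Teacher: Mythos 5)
Your plan is correct in all three parts, and in part (3) it takes a genuinely different route from the paper, so let me compare. Parts (1) and (2) are in substance the paper's arguments: for (1) the paper simply checks that $(x,t)\mapsto\phi_t(x)$ is an isometry from $\Sigma\times I$ onto a neighborhood, while you derive that $\xi$ is parallel and invoke the local splitting --- same content; note, though, that your appeal to Lemma \ref{lem1} is misplaced (that lemma is proved for $\M^n\times I$, where $\xi$ is parallel by hypothesis, which is what you are trying to establish), whereas your parenthetical fallback, $SX=-\widetilde{\nabla}_X(\pm\xi)=0$ for $X$ tangent, is exactly the right computation; and in (2) the phrase ``any curve in $\Sigma$ orthogonal to $\xi$ at $p$ is a geodesic of $M$'' should read ``any geodesic of $\Sigma$,'' the rest of your sketch (the ruled surface $\phi_t(\gamma(s))$ and the forcing of $\tau=0$ via Lemma \ref{lem2}) being the paper's construction.

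For part (3) the paper proves (i)$\Rightarrow$(iii) by showing that the surface-adapted frame $T$, $JT=N\times T$, $\nu N$ consists of mutually orthogonal, pairwise commuting fields, so coordinates with a diagonal metric come for free; (iii)$\Rightarrow$(ii) is then trivial, and (ii)$\Rightarrow$(i) is a separate Koszul-formula argument. You instead run the cycle (i)$\Rightarrow$(ii)$\Rightarrow$(iii)$\Rightarrow$(i). Your (i)$\Rightarrow$(ii) is shorter and cleaner than anything in the paper: $X_1=T$ is Killing by Proposition \ref{prop3}, $X_2=\xi-T=\nu N$ is Killing as a difference of Killing fields, both are nonvanishing near $p$ since $0<|\nu(p)|<1$, and $[X_1,X_2]=[T,\xi]=0$ because $T$ is flow-invariant by construction. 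The price is paid in (ii)$\Rightarrow$(iii), where your sketch glosses over one necessary step: after straightening the commuting fields to $X_1=\partial_y$, $X_2=\partial_z$, the Killing property does give coefficients depending only on the third coordinate, and $\|X_1\|^2+\|X_2\|^2=\|\xi\|^2=1$ does give the $\sin^2\theta/\cos^2\theta$ split, but nothing yet guarantees that the third coordinate field is orthogonal to $X_1$ and $X_2$. You must either prove $[u,X_1]=[u,X_2]=0$ for the unit field $u$ orthogonal to both --- which is precisely the computation the paper performs inside its proof of (ii)$\Rightarrow$(i) --- or eliminate the cross terms $g_{xy}(x)$, $g_{xz}(x)$ by a change of coordinates $y\mapsto y+A(x)$, $z\mapsto z+B(x)$, which works exactly because those coefficients depend only on $x$. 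This is a standard, fillable step, but it cannot be skipped. Your (iii)$\Rightarrow$(i) is then a correct direct verification (the slices $\{z=\mathrm{const}\}$ are totally geodesic and oblique to $\xi$, as follows from the Christoffel symbols in Proposition \ref{prop4}); be careful that not every surface given by a linear relation in $(y,z)$ works --- e.g.\ $\{y=z\}$ is tangent to $\xi$ and in general not totally geodesic. In short, the paper concentrates the coordinate work at the stage where the frame is geometrically handed to it by the surface; your reshuffled cycle gets (ii) almost for free but must then supply the frame-commutation (or a coordinate normalization) on its own when passing from (ii) to (iii).
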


\begin{remark}
Statement (1) is valid in all dimensions. In dimension 3 it is
equivalent to the vanishing of $\tau$ in a neighborhood of $p.$
\end{remark}

\begin{proof}
We denote by $(\phi_t)_{t\in I}$ the local flow of $\xi.$

\medskip

(1) Suppose $M$ admits a totally geodesic surface $\Sigma,$ passing through $p,$ which is everywhere orthogonal to $\xi.$ Restricting $\Sigma$ and the interval $I$ if necessary, the mapping $(x,t)\in \Sigma\times I \mapsto \phi_t(x)\in M$  parametrizes an open subset of $M$ and is easily checked to be an   isometry between the Riemannian product  manifold $\Sigma\times I $ and that  open set. 

Conversely it is clear that if $M$ has locally a product structure of some surface $\Sigma$ and an interval $I$ to which $\xi$ is tangent then the surfaces  $\Sigma\times \{t\}$ are totally geodesic and orthogonal to $\xi$ at each point.  

\medskip

(2) Suppose $s\in J\mapsto \gamma(s)$ is a geodesic of $M$ parametrized by arc length such that $\gamma(t_0)=p$ and $ \langle \gamma^\prime (t_0), \xi \rangle =0$ for some $t_0\in J$ and $\tau(\gamma(s))=0$ for all $s\in J.$ Note first that $\gamma$ is everywhere orthogonal to $\xi.$ Indeed, for all $s\in J$
$$\frac{d}{ds}\langle  \gamma^\prime(s),\xi\rangle = \langle \frac{D\gamma^\prime}{ds} (s), \xi\rangle + \langle \gamma^\prime(s), \tau(\gamma(s)) (\gamma^\prime(s)\times \xi) \rangle = 0.$$
The mapping $(s,t)\mapsto \phi_t(\gamma(s)) $ parametrizes a surface $\Sigma$  in $M$ and   $\tau|_{\Sigma} = 0$.

We now check that $\Sigma$ is totally geodesic. A unit normal field
to $\Sigma$ is the field $N(s,t)= (d\phi_t)(\gamma^\prime(s)) \times
\xi.$ Note that $\xi$ commutes with $(d\phi_t)(\gamma^\prime(s),$
so that
$$\widetilde{\nabla}_{\xi} (d\phi_t)(\gamma^\prime(s))=\widetilde{\nabla}_{(d\phi_t)(\gamma^\prime(s))} \xi= 0,$$
where we used that $\tau|_{\Sigma} = 0.$
Therefore
$$ \widetilde{\nabla}_{\xi} N = \widetilde{\nabla}_{\xi} (d\phi_t)(\gamma^\prime(s)) \times \xi + (d\phi_t)(\gamma^\prime(s)) \times  \widetilde{\nabla}_{\xi} \xi = 0.$$
Since $s \mapsto \phi_t(\gamma(s))$ is a geodesic in $M$ for each
$t,$ we have
$$ \widetilde{\nabla}_{(d\phi_t)(\gamma^\prime(s))} N = \widetilde{\nabla}_{(d\phi_t)(\gamma^\prime(s))} (d\phi_t)(\gamma^\prime(s)) \times \xi + (d\phi_t)(\gamma^\prime(s)) \times \widetilde{\nabla}_{(d\phi_t)(\gamma^\prime(s))} \xi = 0. $$
It follows that $\Sigma$ is totally geodesic.

Conversely, suppose  $M$ admits a totally geodesic surface $\Sigma$ passing through $p$ which is
everywhere tangent to $\xi.$ As geodesics on $\Sigma$ are also geodesics on $M,$ it is enough to check that $\tau|_{\Sigma} = 0.$ This is indeed the case: take an arbitrary point $q$ of $\Sigma$ and a vector $X$
tangent to $\Sigma$ and linearly independent of $\xi$. Since
$\Sigma$ is totally geodesic, the vector $\widetilde{\nabla}_X \xi =
\tau(q) (X \times \xi)$ has to be tangent to $\Sigma$, which is only
possible if $\tau(q)=0$. 

\medskip

(3) First, we prove that (i) implies (iii). Let $\Sigma$ be totally
geodesic in $M$ such that $\xi$ is not tangent to $\Sigma$ at $p$. Extend $T$, $N$, $JT= N\times T$ and $\nu$ to a neighborhood
of this $p$  in $M$ using the local flow of $\xi$. Using
equations (\ref{eq - tangent}), (\ref{eq - derivative}) and (\ref{eq
- decomposition2}), we have:
\begin{align*}
& \widetilde{\nabla}_{JT} T = \widetilde{\nabla}_{JT}\xi - (JT)(\nu)N
 = \tau [(JT \times \xi) - \langle JT\times \xi , N\rangle N]
 = \tau (JT \times \nu N)
 = \tau\nu T, \\
& \widetilde{\nabla}_T JT = \widetilde{\nabla}_T (N \times T)
 = N \times \widetilde{\nabla}_T (\xi - \nu N)
 = N \times \tau(T \times \xi)
 = \tau\nu T.
\end{align*}

It follows that
\begin{align*}
& [T,JT]=\widetilde{\nabla}_T JT - \widetilde{\nabla}_{JT} T = 0,\\
& [T,\nu N] = [T,\xi] = 0,\\
& [JT,\nu N] = [JT,\xi] = 0.
\end{align*}
Hence, we can take local coordinates $(x,y,z)$ on $M$ such that
$\partial_x = JT$, $\partial_y = T$ and $\partial_z = \nu N$. With
respect to these coordinates, the metric takes the form
$$ g = (1-\nu^2)(dx^2 + dy^2) + \nu^2 dz^2. $$
From \eqref{eq - derivative} one has $\partial_y \nu = T(\nu) = 0$
and $\partial_z \nu = \nu N(\nu) = (\xi - T)(\nu) = 0$. After a
change of the $x$-coordinate, we obtain the form for $g$ given in
the theorem.

To see that (iii) implies (ii), it suffices to take $X_1 =
\partial_y$ and $X_2 = \partial_z$.

It remains to prove that (ii) implies (i). Let $u$ be a unit vector
field perpendicular to $X_1$ and $X_2$. We shall first prove that
$u$ commutes with $X_1$ and $X_2$. By using that $u$ is
perpendicular to $X_1$ and that $X_1$ is Killing, we obtain
\begin{align*}
\langle [X_1,u], X_1 \rangle &= \langle \widetilde\nabla_{X_1} u, X_1 \rangle - \langle \widetilde\nabla_u X_1, X_1 \rangle\\
& = -\langle u, \widetilde\nabla_{X_1} X_1 \rangle - \langle \widetilde\nabla_u X_1, X_1 \rangle \\
& = 0.
\end{align*}
Furthermore, by using that $u$ is perpendicular to $X_2$ and that
$X_1$ is Killing, we find
\begin{align*}
\langle [X_1,u], X_2 \rangle &= \langle \widetilde\nabla_{X_1} u, X_2 \rangle - \langle \widetilde\nabla_u X_1, X_2 \rangle\\
& = -\langle u, \widetilde\nabla_{X_1} X_2 \rangle + \langle \widetilde\nabla_{X_2} X_1, u \rangle \\
& = -\langle [X_1,X_2],u \rangle = 0.
\end{align*}
Finally, since $\|u\|=1$ and $X_1$ is Killing,
$$ \langle [X_1,u], u \rangle = \langle \widetilde\nabla_{X_1} u, u \rangle - \langle \widetilde\nabla_u X_1, u \rangle = 0. $$
We conclude that $[X_1,u]=0$. Analogously, we can
prove that $[X_2,u]=0$.

Now consider an integral surface of the distribution spanned by $u$
and $X_1$. Of course, this surface is nowhere tangent or orthogonal
to $\xi = X_1 + X_2$. We shall prove that it is totally geodesic. It
is sufficient to verify that $\widetilde\nabla_u u$,
$\widetilde\nabla_u X_1$ and $\widetilde\nabla_{X_1} X_1$ are all
perpendicular to $X_2$. Using Koszul's formula and the facts that
$[X_1,X_2]=[X_1,u]=[X_2,u]=0$, $\langle X_1,X_2 \rangle = \langle
X_1,u \rangle = \langle X_2,u \rangle =0$ and $\langle u,u \rangle
=1$, gives immediately that $\langle \widetilde\nabla_u u, X_2
\rangle = 0$ and $\langle \widetilde\nabla_u X_1, X_2 \rangle = 0$.
Finally, using the facts that $X_1$ and $X_2$ are orthogonal and
that $X_2$ is Killing, gives $\langle \widetilde\nabla_{X_1}X_1, X_2
\rangle = -\langle X_1, \widetilde\nabla_{X_1}X_2 \rangle = 0$.
\end{proof}

\begin{remark}\label{coordinate}
For later use, we note that in the coordinates where the metric takes the given form in case (iii) we have from the proof of the theorem: $\partial_x= JT/\|T\|$, $\partial_y= T$,  $\cos\theta(x)=\langle \xi,N\rangle$ and $\|T\|= \sin\theta(x).$ 
\end{remark}

We now study further the case (3) in Theorem \ref {theo2}. We are able to determine all the totally geodesic surfaces of $M$ in a neighborhood of $p$ in this case. We will need the  following result which can be verified by straightforward
computations.

\begin{proposition}\label{prop4}
The Levi-Civita connection of the metric $g$ defined in local coordinates $(x,y,z)$ by
\begin{equation} \label{eq - metric0}
g = dx^2 + \sin^2\theta(x) dy^2 + \cos^2\theta(x) dz^2,
\end{equation}
 is given by
\begin{align*}
& \widetilde\nabla_{\partial_x}\partial_x = 0, &&
\widetilde\nabla_{\partial_x}\partial_y = \cot\theta
\theta'\partial_y,
&& \widetilde\nabla_{\partial_x}\partial_z = -\tan\theta \theta' \partial_z, \\
& \widetilde\nabla_{\partial_y}\partial_y = -\cos\theta\sin\theta
\theta' \partial_x, && \widetilde\nabla_{\partial_y}\partial_z = 0,
&& \widetilde\nabla_{\partial_z}\partial_z = \cos\theta\sin\theta
\theta' \partial_x,
\end{align*}
Setting $\xi=\partial_y+\partial_z,$ it follows that $\widetilde \nabla_X \xi = -\theta'(x) (X
\times \xi)$ for any tangent vector $X,$ where $\times$ stands for the cross product associated with the orientation given by the chart $(x,y,z).$ Moreover, the scalar
curvature of the manifold is $(\theta')^2 - 4 \cot(2\theta)
\theta''.$
\end{proposition}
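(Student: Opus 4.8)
The plan is to treat this as a direct computation for a diagonal metric whose coefficients depend only on $x$, organized so that the three assertions (the connection, the formula for $\widetilde{\nabla}_X\xi$, and the scalar curvature) follow in turn from the Christoffel symbols. First I would compute the symbols of $g = dx^2 + \sin^2\theta(x)\,dy^2 + \cos^2\theta(x)\,dz^2$ from the Koszul formula. Writing $(x,y,z)=(x^1,x^2,x^3)$, the only nonconstant coefficients are $g_{22}=\sin^2\theta$ and $g_{33}=\cos^2\theta$, and since they depend on $x$ alone the only surviving derivatives are $\partial_x g_{22}=\sin 2\theta\,\theta'$ and $\partial_x g_{33}=-\sin 2\theta\,\theta'$. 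The nonzero symbols are then $\Gamma^1_{22}=-\sin\theta\cos\theta\,\theta'$, $\Gamma^1_{33}=\sin\theta\cos\theta\,\theta'$, $\Gamma^2_{12}=\cot\theta\,\theta'$ and $\Gamma^3_{13}=-\tan\theta\,\theta'$, and reading these off against $\widetilde{\nabla}_{\partial_i}\partial_j=\Gamma^k_{ij}\partial_k$ yields the six displayed connection formulas at once.

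For the identity $\widetilde{\nabla}_X\xi=-\theta'(X\times\xi)$ with $\xi=\partial_y+\partial_z$, I would argue by linearity and check it on the coordinate frame. From the connection just found, $\widetilde{\nabla}_{\partial_x}\xi=\cot\theta\,\theta'\,\partial_y-\tan\theta\,\theta'\,\partial_z$, while $\widetilde{\nabla}_{\partial_y}\xi=-\sin\theta\cos\theta\,\theta'\,\partial_x$ and $\widetilde{\nabla}_{\partial_z}\xi=\sin\theta\cos\theta\,\theta'\,\partial_x$. To evaluate the right-hand side I would pass to the orthonormal frame $e_1=\partial_x$, $e_2=\tfrac{1}{\sin\theta}\partial_y$, $e_3=\tfrac{1}{\cos\theta}\partial_z$ (defined where $\sin\theta\cos\theta\neq 0$, which holds in the relevant region by Remark \ref{coordinate}), positively oriented for the chart so that $e_1\times e_2=e_3$, etc. Writing $\xi=\sin\theta\,e_2+\cos\theta\,e_3$ and computing $\partial_x\times\xi$, $\partial_y\times\xi$, $\partial_z\times\xi$, I would convert back to the coordinate basis and match coefficients; comparing with Lemma \ref{lem2} this simultaneously identifies $\tau=-\theta'$.

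The scalar-curvature computation is the step I expect to require the most care. I would compute the Ricci tensor directly from the Christoffel symbols — or, equivalently, the three sectional curvatures $K_{xy}$, $K_{xz}$, $K_{yz}$ of the coordinate planes — using $\widetilde{\nabla}_{\partial_x}\partial_x=0$ to keep the curvature operator short. The diagonal structure forces $R_{ij}$ to be diagonal, so the scalar curvature is $g^{11}R_{11}+g^{22}R_{22}+g^{33}R_{33}$. The genuine obstacle is the trigonometric bookkeeping: the raw expressions involve $\cot^2\theta+\tan^2\theta$ and $\tfrac{1}{\sin^2\theta}+\tfrac{1}{\cos^2\theta}$, which must be collapsed using $\cot\theta-\tan\theta=2\cot 2\theta$ and $\sin 2\theta=2\sin\theta\cos\theta$ before a clean closed form appears; several bulky terms cancel, and it is easy to mishandle a factor or a sign. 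As a safeguard I would test the outcome on $\theta(x)=x$, where $g=dx^2+\sin^2 x\,dy^2+\cos^2 x\,dz^2$ is the Hopf parametrization of the round unit $\S^3$, whose scalar curvature must equal $6$; this single check fixes the constant coefficient of $(\theta')^2$ and confirms the $\cot 2\theta\,\theta''$ term.
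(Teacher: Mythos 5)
Your overall strategy is precisely the ``straightforward computation'' that the paper leaves to the reader (it prints no proof of this proposition), and the first two parts of your plan are correct as they stand: the Christoffel symbols you obtain from the Koszul formula are right and reproduce the six displayed covariant derivatives, and your frame-by-frame check of $\widetilde\nabla_X\xi=-\theta'(X\times\xi)$ on $\partial_x,\partial_y,\partial_z$, using the positively oriented orthonormal frame $e_1=\partial_x$, $e_2=\tfrac{1}{\sin\theta}\partial_y$, $e_3=\tfrac{1}{\cos\theta}\partial_z$ and yielding the identification $\tau=-\theta'$ via Lemma \ref{lem2}, is sound.

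The gap is in the scalar curvature clause. Carrying out the computation you describe gives, for the coordinate planes,
\[
K_{xy}=-\frac{(\sin\theta)''}{\sin\theta}=(\theta')^2-\cot\theta\,\theta'',\qquad
K_{xz}=-\frac{(\cos\theta)''}{\cos\theta}=(\theta')^2+\tan\theta\,\theta'',\qquad
K_{yz}=-\frac{(\sin\theta)'(\cos\theta)'}{\sin\theta\cos\theta}=(\theta')^2,
\]
so the scalar curvature is $2(K_{xy}+K_{xz}+K_{yz})=6(\theta')^2-4\cot(2\theta)\,\theta''$, and \emph{not} $(\theta')^2-4\cot(2\theta)\,\theta''$ as printed in the proposition. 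Your own safeguard detects exactly this: for $\theta(x)=x$ the metric is the round unit $\S^3$, whose scalar curvature is $6$, whereas the printed formula gives $1$. (The corrected coefficient is also consistent with the paper's later assertion, in the proof of Theorem \ref{sphere}, that $\theta=\alpha x+\beta$ with $\alpha\neq 0$ gives constant sectional curvature $\alpha^2$, hence scalar curvature $6\alpha^2$; the printed $(\theta')^2$ is evidently a typo for $6(\theta')^2$.) The flaw in your write-up is therefore that you assert the $\theta(x)=x$ test will ``fix the constant coefficient and confirm'' the stated formula: in fact it refutes the formula as stated, and an honest completion of your proof must carry out the curvature computation, report the value $6(\theta')^2-4\cot(2\theta)\,\theta''$, and note the discrepancy rather than claim verification. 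A second, smaller point: since $\theta''\equiv 0$ when $\theta(x)=x$, that test gives no information at all about the $\cot(2\theta)\,\theta''$ term, so it cannot ``confirm'' it as you claim; to check that term you must either do the algebra in full or test against a non-linear $\theta$, for instance by cross-checking with the Gaussian curvature formula of Proposition \ref{prop5}.
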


Our  second main result in this section decribes the totally geodesic surfaces in $M$ in case (3)  in Theorem \ref {theo2}. It  characterizes in particular  the flat and spherical metrics. It can be compared to a result of E. Cartan (see \cite{B} p. 233). In  the  three-dimensional case, Cartan's theorem asserts  a three-dimensional Riemannian manifold with a totally geodesic surface passing through any point with any specified plane as tangent plane must be a space form. When the manifold admits a unit Killing field, our result says that the existence of very few totally geodesic surfaces suffices to characterize the space forms of non-negative curvature.
 
 \newpage
 
\begin{theorem}\label{sphere}
Let $M$ be a Riemannian three-manifold carrying a unit Killing  field $\xi$ and let $p\in M$. Suppose there is a totally geodesic surface $\Sigma_1$ passing through $p$ which is neither orthogonal nor tangent to $\xi$ at $p.$ Then:

\begin{itemize}
\item [(1)] There is a second totally geodesic surface  passing through $p$ which is orthogonal to $\Sigma_1.$ 
\item[(2)] If there exists a third  totally geodesic surface through $p$ which is not tangent to $\xi$ at $p,$ 
then $M$ has constant non-negative sectional curvature in  a neighborhood of $p$ and thus is around $p$ isometric to 
an open subset of the sphere $\Bbb S^3$ with a metric of constant curvature or  to an open set of the Euclidean space $\Bbb R^3.$
\item[(3)] If $M$ does not have constant positive curvature near $p,$ then there exists a totally geodesic surface through $p$ which is tangent to $\xi$ at $p$ if and only if $\tau(p)=0.$
\end{itemize}
\end{theorem}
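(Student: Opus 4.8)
The plan is to work throughout in the coordinates $(x,y,z)$ of Theorem \ref{theo2}(3)(iii), in which $\Sigma_1=\{z=\mathrm{const}\}$, $\xi=\partial_y+\partial_z$ and, by Proposition \ref{prop4}, $\tau=-\theta'$. Since $\Sigma_1$ is at $p$ neither tangent nor orthogonal to $\xi$, one has $\sin\theta(x_0)\neq0$ and $\cos\theta(x_0)\neq0$, so all trigonometric denominators below are harmless near $p$. Passing to the orthonormal frame $e_1=\partial_x$, $e_2=\partial_y/\sin\theta$, $e_3=\partial_z/\cos\theta$ and using Proposition \ref{prop4}, I would compute the three coordinate sectional curvatures
$$K_{12}=(\theta')^2-\cot\theta\,\theta'',\quad K_{13}=(\theta')^2+\tan\theta\,\theta'',\quad K_{23}=(\theta')^2,$$
and check that all mixed components of the curvature tensor in this frame vanish (equivalently, the Ricci tensor is diagonal here). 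Consequently $M$ has constant sectional curvature on an open set precisely when $K_{12}=K_{13}=K_{23}$ there, i.e.\ (using $\sin\theta\cos\theta\neq0$) precisely when $\theta''\equiv0$, the common value then being $(\theta')^2\geq0$; the asserted local isometry with $\S^3$ or $\R^3$ follows from the classical local classification of space forms.

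For part (1) I would simply verify, from the connection of Proposition \ref{prop4}, that the slice $\{y=\mathrm{const}\}$ through $p$ is totally geodesic and that, its normal being proportional to $\partial_y$ while that of $\Sigma_1$ is proportional to $\partial_z$, it meets $\Sigma_1$ orthogonally. The engine for (2) and (3) is the following consequence of the Codazzi equation: if $\Sigma$ is totally geodesic with unit normal $n$, then $R(u,v)w$ is tangent whenever $u,v,w$ are, i.e.\ the tangent plane is curvature-invariant. For the diagonal curvature tensor above a short computation shows this is equivalent to the vector $(K_{23}n_1,K_{13}n_2,K_{12}n_3)$ being proportional to $n=(n_1,n_2,n_3)$; hence at a point where the three curvatures are pairwise distinct---equivalently (using $\sin\theta\cos\theta\neq0$) where $\theta''\neq0$---the normal $n$ must be one of $e_1,e_2,e_3$, so $\Sigma$ is there tangent to a coordinate slice.

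For part (2), let $\Sigma_3$ be a totally geodesic surface through $p$, not tangent to $\xi$ at $p$ and distinct as a germ from the two slices above. Its normal $n(p)$ cannot be $e_1$ (that plane contains $\xi$), and cannot be $e_2$ or $e_3$ (since a totally geodesic surface is determined by its tangent plane at a point, that would force $\Sigma_3=\{y=\mathrm{const}\}$ or $\{z=\mathrm{const}\}$). Thus $n(p)\notin\{\pm e_1,\pm e_2,\pm e_3\}$, so by the curvature-invariance criterion two of the three curvatures coincide at $p$, giving $\theta''(x_0)=0$. The main obstacle is to upgrade this pointwise fact to a neighborhood. Because $n(p)\neq\pm e_1$, the function $x$ is non-constant along $\Sigma_3$ and sweeps an interval; and because $n$ is continuous with $n(p)\notin\{\pm e_1,\pm e_2,\pm e_3\}$, the same holds on a whole neighborhood in $\Sigma_3$, where the eigenvector criterion can hold only if two curvatures coincide, i.e.\ only if $\theta''=0$. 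Hence $\theta''$ vanishes on an $x$-interval, $M$ has constant non-negative curvature near $p$, and is locally $\S^3$ or $\R^3$ by the first paragraph.

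Finally, for part (3) the implication $\tau(p)=0\Rightarrow$ existence is direct: if $\theta'(x_0)=0$ then the slice $\{x=x_0\}$, whose second fundamental form is a multiple of $\cos\theta\sin\theta\,\theta'$ and hence vanishes, is totally geodesic and tangent to $\xi$. For the converse, assume $M$ is not of constant positive curvature near $p$ and let $\Sigma'$ be totally geodesic and tangent to $\xi$ at $p$, with normal $n(p)\perp\xi(p)$. If $n(p)=e_1$, the tensorial identity $h(e_2,e_2)|_p=\langle\widetilde{\nabla}_{e_2}e_2,e_1\rangle=-\cot\theta(x_0)\,\theta'(x_0)$, together with $h\equiv0$, gives $\theta'(x_0)=0$ at once. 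If $n(p)\neq e_1$, then tangency to $\xi$ rules out $n(p)=e_2,e_3$, so $n(p)\notin\{\pm e_1,\pm e_2,\pm e_3\}$ and the propagation argument of part (2) applies verbatim, yielding constant curvature $(\theta')^2$ in a neighborhood; as this is not positive by hypothesis it equals $0$, whence $\theta'(x_0)=0$. In either case $\tau(p)=-\theta'(x_0)=0$, completing the equivalence.
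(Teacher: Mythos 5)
Your overall framework is correct and is a genuinely different route from the paper's: you compute the diagonal sectional curvatures $K_{12},K_{13},K_{23}$ of the doubly warped metric and use the Codazzi fact that the tangent plane of a totally geodesic surface is curvature-invariant, so that its unit normal must be an eigenvector of $\mathrm{diag}(K_{23},K_{13},K_{12})$; the paper instead extends the fields $T_1,T_2$ by the flow of $\xi$, proves $\mathrm{grad}\,\tau=0$ on a dense open set via a dichotomy on the linear dependence of $\xi,T_1,T_2$, and in part (3) resorts to a saddle-surface argument. Your argument handles (1) and (2) correctly, and in fact more uniformly than the paper: the case of a third surface orthogonal to $\xi$ at $p$ needs no separate discussion, since its normal $\pm(\sin\theta\,e_2+\cos\theta\,e_3)$ is still not a frame vector, so your eigenvector criterion applies verbatim. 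The "if" direction of (3) and the subcase $n(p)\neq\pm e_1$ of the converse are also sound. The gap is the subcase $n(p)=\pm e_1$ of (3).

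In that subcase the claimed identity $h(e_2,e_2)|_p=\langle\widetilde{\nabla}_{e_2}e_2,e_1\rangle$ is not tensorial and is false. One may compute $h$ at $p$ as $\langle\widetilde{\nabla}_X\tilde Y,N\rangle$ only when the extension $\tilde Y$ is tangent to $\Sigma'$ along $\Sigma'$ (equivalently as $-\langle\widetilde{\nabla}_XN,Y\rangle$ with $N$ the genuine unit normal field of $\Sigma'$); the ambient frame field $e_2$ is tangent to $\Sigma'$, and $e_1$ normal to it, only at the single point $p$, and the discrepancy is the uncontrolled term $X\langle e_2,N\rangle$. The round sphere shows the step genuinely fails: for $\theta(x)=x$ the metric is the standard metric on $\S^3$, with $\tau\equiv -1$, and through any $p$ with $\sin\theta(x_0)\cos\theta(x_0)\neq 0$ the great $2$-sphere $\exp_p\bigl(\mathrm{span}(e_2,e_3)\bigr)$ is totally geodesic and tangent to $\xi$ at $p$, yet $\langle\widetilde{\nabla}_{e_2}e_2,e_1\rangle=-\cot\theta(x_0)\neq 0$; your identity would force $\tau(p)=0$, which is false. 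Note also that this is exactly the subcase where your eigenvector criterion is vacuous ($e_1$ is always an eigenvector of the diagonal operator), and that your argument for it never invokes the hypothesis that $M$ is not of constant positive curvature near $p$ --- the very hypothesis that excludes $\S^3$ --- so no pointwise computation can close it. The paper handles precisely this configuration by showing $\tau$ is constant along $\Sigma'$ and then splitting into three cases according to the image of $x|_{\Sigma'}$: if it contains an interval around $x_0$, then $\tau$ is constant near $p$ and the curvature hypothesis forces $\tau\equiv 0$; if $\Sigma'\subset\{x=x_0\}$, then $\theta'(x_0)=0$ directly from Proposition \ref{prop4}; and if $\Sigma'$ stays on one side of the slice $\{x=x_0\}$, then, were $\tau(p)\neq 0$, the slice would be a saddle surface and a totally geodesic surface tangent to it at $p$ could not stay on one side of it. You need an argument of this kind (or some other use of the curvature hypothesis) in the subcase $n(p)=\pm e_1$; with that repaired, the rest of your proof stands.
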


\begin{proof} From  case (3)  in Theorem \ref {theo2} we can find local coordinates $(x,y,z)$ in a neighborhood $W$ of $p$ where the metric takes the form \eqref{eq - metric0}, the point $p$ corresponding to the origin. Restricting $\Sigma_1$ if necessay we can assume that $\Sigma_1$ is given by the equation $z=0$.

\medskip

(1) From the above local expression (\ref{eq - metric0}) for the metric we see that the surface $\Sigma_0$, defined by the equation $y=0$, is totally geodesic and is orthogonal to $\Sigma_1.$  

\medskip

(2) Suppose there is a third totally geodesic surface $\Sigma_2$ containing $p$ which is not tangent to $\xi$ at $p$. 

We first treat the case when $\Sigma_2$ is not orthogonal to $\xi$ at $p.$
We will show that the function $\tau$ is constant in a neighborhood of $p.$  This will conclude the proof as this means that, in the coordinates introduced above,  $\theta(x)= \alpha x +\beta$ for some constants $\alpha$ and $\beta$. 
If $\alpha=0,$ the metric $g$ is flat. If $\alpha\ne 0,$ then it is straightforward to check that $g$ has constant sectional curvature 
$\alpha^2$.

We still denote by $\Sigma_2$ the component of $W\cap \Sigma_2$ containing $p$. Restricting $W$ and replacing $\Sigma_2$ by an open subset of it  if necessary, we can assume the intersection $\Sigma_1\cap\Sigma_2$ is connected. 
For $i=1,2,$ denote by $N_i$  a unit normal to $\Sigma_i$. As before we introduce the vector field $T_i$ tangent to $\Sigma_i$ and the real valued function $\nu_i$ on $\Sigma_i$ by the orthogonal decomposition 
$$\xi= T_i + \nu_i N_i.$$
We again use the same notations to denote the extensions of $N_i, T_i$ and $\nu_i$ to $W$ using the flow of $\xi.$ Note that along $\Sigma_1\cap\Sigma_2$ the vectors $N_1$ and $N_2$ are independent, so, up to restricting $W$ if necessary, we can assume that their extensions are also pointwise independent. In the same way, as 
$\Sigma_1$ and $\Sigma_2$ are distinct, $T_1$ and $T_2$ are independent along $\Sigma_1\cap\Sigma_2$ and we can assume their extensions are independent in $W.$

\medskip

Suppose that  $\xi$, $T_1$ and $T_2$ are linearly independent in an open set $U\subset W.$  It follows from Proposition \ref{prop4} and Remark \ref{coordinate} that $\tau$ does not depend on $T_1$, that is,  $T_1(\tau)=0$. In the same way $T_2(\tau)=0$. As $\xi(\tau)=0,$ see Lemma \ref{lem2}, we conclude that $\mathrm{grad}\,\tau=0$ in $U,$ where $\mathrm{grad}$ denotes the gradient on $M.$

\medskip

Suppose now that $\xi$, $T_1$ and $T_2$ are (pointwise) linearly dependent in some connected open set $V\subset W.$  Let $S$ denote the surface 
tangent to the distribution spanned by $\xi$ and $T_1$ which  passes through $p.$ From the expression of the metric \eqref{eq - metric0} obtained in Theorem \ref{theo2} using the surface $\Sigma_1,$ we see that the coordinate $x$ is the signed distance function to the surface $S.$ As we are assuming $\xi$, $T_1$ and $T_2$ are dependent in $V,$ we conclude that we obtain the same coordinate function $x$ when we use $\Sigma_2$ in Theorem \ref{theo2}.  The Killing fields $T_1$ and $T_2$ are tangent to the integral surfaces of the distribution spanned by $\xi$ and $T_1$, that is, the level surfaces of the coordinate function $x,$ and so are Killing fields on each of them. Moreover their norms depend only on  $x$ (see Remark \ref{coordinate}). For each $x,$ the level surface corresponding to $x$ is flat, that is, locally euclidean. The Killing fields  $\xi, T_1$ and $T_2$ on such a surface correspond to constant   fields under an  isometry with  an open subset of the Euclidean plane since they have constant norms. Therefore we have in $V$  a relation of the form
$$\xi= \alpha_1(x) T_1 + \alpha_2(x) T_2.$$
 We next show that the functions $\alpha_1$ and $\alpha_2$ are actually constant. 

As $\xi$, $T_1$ and $T_2$ are Killing fields, we have for any vector field $Y$:
$$ \alpha_1^\prime(x)\langle T_1,Y\rangle +  \alpha_2^\prime(x)\langle T_2, Y\rangle + Y(\alpha_1)\langle T_1,\partial_x\rangle + Y(\alpha_2)\langle T_2, \partial_x\rangle=0.$$
Taking successively $Y= T_1$ and $Y=T_2$ we get
$$ \langle \alpha_1'(x) T_1 + \alpha_2'(x) T_2, T_1 \rangle = \langle \alpha_1'(x) T_1 + \alpha_2'(x) T_2, T_2 \rangle = 0.$$

Since $T_1$ and $T_2$ are independent,  we conclude that $ \alpha_1^\prime(x)= \alpha_2^\prime(x)=0,$ that is,  $\alpha_1$ and $\alpha_2$ are constants. 

Replacing $T_i$ by $\xi -\nu_i N_i,$ for $i=1,2,$ 
in the decomposition $\xi= \alpha_1 T_1 + \alpha_2 T_2 $ we get
\begin{equation}\label{xi}
\xi= \gamma_1 \nu_1 N_1 +\gamma_2\nu_2N_2,
\end{equation}
where $ \gamma_i={\alpha_i}/({\alpha_1+\alpha_2-1}), i=1,2.$ We have:
$$\widetilde\nabla_{\partial_x}\xi = \gamma_1\nu_1^\prime(x)N_1+\gamma_2\nu_2^\prime(x)N_2.$$
As $\nu_i=\cos\theta_i(x)$, we have $\nu_i^\prime(x)=-\theta_i^\prime(x) \sin\theta_i(x)=\tau \sin\theta_i(x),\, i=1,2.$
So
$$\widetilde\nabla_{\partial_x}\xi =\tau\{\gamma_1 \sin\theta_1(x) N_1 +\gamma_2 \sin\theta_2(x) N_2\}.$$
Taking the inner product of both sides with $\xi$ we obtain:
$$\tau\{\gamma_1\sin\theta_1(x)\nu_1  +\gamma_2\sin\theta_2(x) \nu_2\}=0.$$
That is,
$$\tau\{\gamma_1\sin2\theta_1(x)+\gamma_2\sin2\theta_2(x)\}=0.$$
Suppose $\tau$ does not vanish in some open set $V_0\subset V.$ Then on $V_0:$
\begin{equation}\label{angles}
\gamma_1\sin2\theta_1(x)+\gamma_2\sin2\theta_2(x)=0.
\end{equation}
Taking the derivative we get:
 \begin{equation}\label{angles2}
\gamma_1\cos2 \theta_1(x)+\gamma_2\cos 2\theta_2(x)=0.
\end{equation}
 By (\ref{xi}), $(\gamma_1,\gamma_2)\ne (0,0),$  so the determinant of the system in the unknowns $\gamma_1$ and $\gamma_2$ formed by equations (\ref{angles}) and (\ref{angles2}) has to vanish, that is, 
$$ \sin 2(\theta_2(x)-\theta_1(x))=0.$$
However, the quantity $\theta_2(x)-\theta_1(x),$ which is a constant since $ \theta_1^\prime(x)=-\tau=\theta_2^\prime(x),$ is neither equal to $0$ nor to $\pm\pi/2.$ Indeed, otherwise this would imply that $\Sigma_2$ locally coincides with $\Sigma_1$ or $\Sigma_0$, but this contradicts the assumption that $\Sigma_2$ is a totally geodesic surface through $p$, different from $\Sigma_0$ and $\Sigma_1$.  Consequently $\tau \equiv 0$ in $V.$

Summarizing  we have shown that $\mathrm{grad}\,\tau =0$ on an open dense set in a neighborhood of $p$ in  $M^3.$ Therefore $\tau$ is constant near $p.$ This concludes the proof of (2) when $\Sigma_2$ is not orthogonal to $\xi$ at $p.$

\medskip

Suppose now that $\Sigma_2$ is orthogonal to $\xi$ at $p.$ Let $S_0\subset \Sigma_2$ denote the subset where $\Sigma_2$ is not orthogonal to $\xi.$ By the above argument, the function $\tau$ is locally constant on $S_0.$ Suppose now that 
$\Sigma_2$ is orthogonal to $\xi$ in an open set $S_1\subset \Sigma_2.$ So $\xi$ is the unit normal to $\Sigma_2$ on $S_1.$ As $\Sigma_2$ is totally geodesic, by the formula in Lemma \ref{lem2}, we have $\tau\equiv 0$ on $S_1.$ 
Consequently, denoting by $\mathrm{grad}^{\Sigma_2}$ the gradient on $\Sigma_2,$ we have $\mathrm{grad}^{\Sigma_2} \tau=0$ on an open dense subset of $\Sigma_2$ and so $\tau$ is constant on $\Sigma_2.$ As $\xi$ is transversal to $\Sigma_2$ and $\xi(\tau)=0,$ we conclude again that $\tau$ is constant in a neighborhood of $p$ in $M^3.$ This  concludes the proof of (2). 

\medskip

(3) Suppose $\tau(p)=0,$ that is, $\theta'(0)=0.$ Then from Proposition \ref{prop4}  the surface given by $x=0$ is totally geodesic. Conversely, suppose there  is a connected totally geodesic surface $\Sigma$ through $p$ which is tangent to $\xi$ at $p.$ We may assume $\Sigma$ is contained in the coordinate neighborhood where the metric on $M$ takes the form (\ref{eq - metric0}). 
By the same arguments as in (2), we get that $\tau$ is  constant on any connected  open  subset of $\Sigma$ where $\xi$ is not tangent to $\Sigma.$ Moreover  $\tau$ vanishes on any open subset of $\Sigma$ where $\xi$
is tangent to $\Sigma$ (see the proof of (2) in Theorem \ref{theo2}). As previously, we conclude that  $\tau$ is constant on $\Sigma.$

Denote by $\pi$ the projection on the $x-$axis. We consider three cases:
 
 - {\it First case: }  $I:=\pi(\Sigma)$ contains  an open interval containing $0.$ It follows that $\tau$ (which depends only on $x$) is constant in a neighborhood of $p.$ So $M$ is, near $p,$ flat  or has constant positive curvature. The second possibility is excluded by hypothesis and consequently $\tau$ is identically zero near $p.$ 

- {\it Second case:} $I=\{0\},$ that is, $\Sigma\subset \{x=0\}.$ From the equations in Proposition \ref{prop4}, we see that the surface $\{x=0\}$ is totally geodesic if and only if $\theta'(0)=0,$ that is, if and only if $\tau(p)=0.$ 

- {\it Third case:} $ \pi(\Sigma)\subset [0,+\infty)$ or $\pi(\Sigma)\subset (-\infty,0].$ This means the surface $\Sigma$ is on one side of  the surface $\{x=0\}.$  The extrinsic curvature of the surface $\{x=0\}$ is $K_{ext}= -(\theta'(0))^2= -\tau(p)^2$
as is seen from Proposition \ref{prop4}. It is therefore a saddle surface if $\tau(p)\neq 0$ and we are led in this case to a contradiction since it is a general fact that a 
totally geodesic surface tangent to a saddle surface at a point cannot lie on one side of it near the tangency point. So necessarily $\tau(p)=0.$ 
\end{proof}

%%%%%%%%%%%%%%%%%%%%%%%%%%%%%%%%%%%%%%%%%%%%%%%%%%%%%%%%%%%%%%%%%%%%%%%%%%%%%%%%%%%%%%%%%%%%%%%%%%%%

\section{Properties of the three-spaces}

In this section we shall discuss some properties of
three-dimensional spaces $M^3$ with a unit Killing field $\xi$ that
admit totally geodesic surfaces which are neither orthogonal nor tangent to $\xi.$ From Theorem \ref{theo2} we know that
such a manifold locally admits a metric of type
\begin{equation} \label{eq - metric}
g = dx^2 + \sin^2\theta(x) dy^2 + \cos^2\theta(x) dz^2,
\end{equation}
with $\xi = \partial_y + \partial_z$.

The following result, which can be checked through straightforward
computations, states that these three-spaces admit Riemannian submersions
onto a surface with a Killing field. 

\begin{proposition} \label{prop5}
Given $M^3$ as above, consider a surface $M^2$ with local
coordinates $(u,v)$ and metric
$$ du^2 + \frac 14 \sin^2(2\theta(u)) dv^2. $$
Then the mapping $\pi : M^3 \to M^2 : (x,y,z) \mapsto (u,v)=(x,y-z)$
is a Riemannian submersion whose fibers are integral curves of the
unit Killing field $\xi = \partial_y + \partial_z$. Remark that the
Gaussian curvature of $M^2$ is $K = 4(\theta')^2 -
2\cot(2\theta)\theta''$.
\end{proposition}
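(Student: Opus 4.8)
The plan is to verify directly that $\pi$ is a Riemannian submersion and then read off the Gaussian curvature from the standard formula for a rotationally symmetric surface metric; the whole statement is a computation, so the work is mostly bookkeeping. First I would describe the fibers: since $\pi(x,y,z)=(x,y-z)$, a fiber is the locus where $x$ and $y-z$ are constant, which is exactly an integral curve of $\partial_y+\partial_z=\xi$. Because $g(\xi,\xi)=\sin^2\theta+\cos^2\theta=1$, the field $\xi$ is a unit vertical field spanning the vertical distribution, which establishes the claim about the fibers.

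Next I would exhibit the horizontal distribution and check that $d\pi$ restricts to a pointwise isometry on it. Computing the differential on the coordinate frame gives $d\pi(\partial_x)=\partial_u$, $d\pi(\partial_y)=\partial_v$ and $d\pi(\partial_z)=-\partial_v$. The horizontal space is the $g$-orthogonal complement of $\xi$; clearly $\partial_x$ is horizontal, and a second horizontal vector is $H=\cos^2\theta\,\partial_y-\sin^2\theta\,\partial_z$, since $g(H,\xi)=\cos^2\theta\sin^2\theta-\sin^2\theta\cos^2\theta=0$. One finds $\|H\|^2=\cos^4\theta\sin^2\theta+\sin^4\theta\cos^2\theta=\sin^2\theta\cos^2\theta=\tfrac14\sin^2(2\theta)$ and $d\pi(H)=(\cos^2\theta+\sin^2\theta)\partial_v=\partial_v$. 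Comparing with the target metric $du^2+\tfrac14\sin^2(2\theta)\,dv^2$ shows $\|d\pi(\partial_x)\|^2=\|\partial_u\|^2=1=\|\partial_x\|^2$, $\|d\pi(H)\|^2=\|\partial_v\|^2=\tfrac14\sin^2(2\theta)=\|H\|^2$, and that both the horizontal pair $\{\partial_x,H\}$ and its image $\{\partial_u,\partial_v\}$ are orthogonal. Hence $d\pi$ is a linear isometry on the horizontal space at each point, so $\pi$ is a Riemannian submersion.

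Finally I would compute the curvature. Writing the surface metric as $du^2+\phi(u)^2\,dv^2$ with $\phi=\tfrac12\sin(2\theta)$, the classical formula for such a metric gives $K=-\phi''/\phi$. Differentiating, $\phi'=\cos(2\theta)\,\theta'$ and $\phi''=-2\sin(2\theta)(\theta')^2+\cos(2\theta)\,\theta''$, so that
$$ K=-\frac{\phi''}{\phi}=-\frac{-2\sin(2\theta)(\theta')^2+\cos(2\theta)\,\theta''}{\tfrac12\sin(2\theta)}=4(\theta')^2-2\cot(2\theta)\,\theta'', $$
which is the asserted value. The only real obstacle is the bookkeeping in the middle step: one must keep the anisotropic coefficients $\sin^2\theta$ and $\cos^2\theta$ straight so that the horizontal norm $\|H\|^2$ collapses to $\tfrac14\sin^2(2\theta)$ and matches the prescribed coefficient of $dv^2$; everything else is a direct differentiation.
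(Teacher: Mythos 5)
Your proof is correct and is exactly the "straightforward computation" the paper invokes without writing out: you identify the fibers as integral curves of $\xi$, verify that $d\pi$ is an isometry on the horizontal frame $\{\partial_x, H\}$ with $H=\cos^2\theta\,\partial_y-\sin^2\theta\,\partial_z$, and obtain the curvature from $K=-\phi''/\phi$ with $\phi=\tfrac12\sin(2\theta)$. All the computations check out, so there is nothing to add.
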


Let us now study some global properties of $M^3$. In particular we
want to investigate which manifolds admit a smooth metric which, in
local coordinates, is given by \eqref{eq - metric}.

We will first recall two lemmas from \cite{P} on a class of more
general doubly warped products
\begin{equation} \label{eq - doubly warped}
(I \times \S^p \times \S^q, \ dx^2 + \varphi^2(x) g_{\S^p} +
\psi^2(x) g_{\S^q}),
\end{equation}
where $I \subseteq \R$ is an open interval and $g_{\S^p}$ and
$g_{\S^q}$ are the standard Riemannian metrics on $\S^p$ and $\S^q$.

\begin{lemma}[\cite{P}] \label{lem3}
If $\varphi:(0,b)\to(0,\infty)$ is smooth and $\varphi(0)=0$, then
the metric in \eqref{eq - doubly warped} is smooth at $x=0$ if and
only if $\varphi^{(even)}(0)=0$, $\varphi'(0)=1$, $\psi(0)>0$ and
$\psi^{(odd)}(0)=0$. In this case, the topology near $x=0$ is
$\R^{p+1} \times \S^q$.
\end{lemma}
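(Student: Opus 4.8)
The plan is to reduce the statement to the classical criterion for smoothness of a rotationally invariant metric at the center of a collapsing sphere, applied separately to the two warped factors; the essential point is that at $x=0$ only the $\S^p$ factor degenerates. To set up the geometry, note that since $\varphi(0)=0$ while $\psi(0)>0$, as $x\to 0^+$ the $\S^p$ factor shrinks to a point while the $\S^q$ factor survives, so $\{x=0\}$ is a single copy of $\S^q$. Filling in this collapse, a half-open collar $[0,\epsilon)\times\S^p$ with $\{0\}\times\S^p$ crushed to a point is diffeomorphic to an open ball, so near $x=0$ the manifold is diffeomorphic to $\R^{p+1}\times\S^q$; this is both the asserted local topology and the smooth structure in which the metric must be tested.

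Next I would pass to Cartesian coordinates on the $\R^{p+1}$ factor, writing $X=x\,\omega$ with $\omega\in\S^p\subset\R^{p+1}$, so that $x=|X|$. A direct computation rewrites the collapsing block as
\begin{equation*}
dx^2+\varphi(x)^2 g_{\S^p}=a(x)\,\delta_{ij}\,dX^i dX^j+b(x)\,X^iX^j\,dX^i dX^j,
\end{equation*}
with $a(x)=\varphi(x)^2/x^2$ and $b(x)=(x^2-\varphi(x)^2)/x^4$, while the surviving block $\psi(x)^2 g_{\S^q}$ is unchanged and produces no cross terms with the $dX^i$. Thus the smoothness of the full metric splits into the smoothness of these two blocks separately.

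The heart of the argument is the standard fact that a rotationally invariant symmetric two-tensor $a(x)\,\delta_{ij}+b(x)\,X^iX^j$ on $\R^{p+1}$ extends smoothly across the origin precisely when $a$ and $b$ are smooth even functions of $x$, and that a radial function $\psi^2(|X|)$ is smooth at the origin precisely when $\psi^2$ is a smooth even function of $x$. Granting this, I would translate back: writing $\varphi(x)=x\,h(x)$ gives $a=h^2$ and $b=(1-h^2)/x^2$, so finiteness of $b$ forces $h(0)^2=1$, hence $\varphi'(0)=1$ (the sign fixed by $\varphi>0$), after which $h=\sqrt{1-x^2 b}$ is smooth and even exactly when $a,b$ are; this is equivalent to $\varphi$ being a smooth \emph{odd} function with $\varphi'(0)=1$, i.e. $\varphi^{(even)}(0)=0$ and $\varphi'(0)=1$. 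For the second block, since $\psi(0)>0$ the function $\psi$ is smooth and even iff $\psi^2$ is, i.e. iff $\psi^{(odd)}(0)=0$, and nondegeneracy of the $g_{\S^q}$ block requires $\psi(0)>0$. Together these are exactly the four stated conditions.

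The main obstacle is this third step: establishing or citing the even-function criterion for rotationally invariant tensors, and carrying out the algebraic passage between ``$a,b$ even'' and ``$\varphi$ odd with $\varphi'(0)=1$'' cleanly, including the nondegeneracy bookkeeping that singles out $\varphi'(0)=1$ rather than merely $|\varphi'(0)|=1$. The collapse-and-fill-in topology and the coordinate computation in the second step are routine, and I would leave them as direct verifications.
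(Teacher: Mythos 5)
The paper never proves this lemma---it is quoted directly from Petersen's book \cite{P}---and your argument is essentially the standard proof behind that citation: fill in the collapsed $\S^p$ factor to get $\R^{p+1}\times\S^q$, pass to Cartesian coordinates $X=x\,\omega$ (your formulas $a=\varphi^2/x^2$, $b=(x^2-\varphi^2)/x^4$ are correct, as is the block-diagonal splitting and the algebraic translation of ``$a,b$ smooth even'' into ``$\varphi^{(even)}(0)=0$, $\varphi'(0)=1$'' and ``$\psi^{(odd)}(0)=0$, $\psi(0)>0$''). The one ingredient you leave as a citation---that a rotationally invariant tensor $a(|X|)\delta_{ij}+b(|X|)X^iX^j$ (resp.\ a radial function) is smooth at the origin iff $a,b$ (resp.\ the function) are smooth functions of $|X|^2$---is indeed standard and can be supplied by taking the trace and the contraction against $X^iX^j$, restricting to a line through the origin, and applying Whitney's even-function theorem, so your proof is complete modulo that classical fact.
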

\begin{lemma}[\cite{P}] \label{lem4}
If $\varphi:(0,b)\to(0,\infty)$ is smooth and $\varphi(b)=0$, then
the metric in \eqref{eq - doubly warped} is smooth at $x=b$ if and
only if $\varphi^{(even)}(b)=0$, $\varphi'(b)=-1$, $\psi(b)>0$ and
$\psi^{(odd)}(b)=0$. In this case, the topology near $x=b$ is also
$\R^{p+1} \times \S^q$.
\end{lemma}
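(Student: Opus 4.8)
The plan is to reduce Lemma \ref{lem4} to Lemma \ref{lem3} by reflecting the interval about its right endpoint, so that no new computation with spherical metrics is needed. First I would introduce the change of variable $\bar x = b - x$, which maps a left-neighborhood of $x = b$ diffeomorphically onto a right-neighborhood of $\bar x = 0$. Setting $\bar\varphi(\bar x) = \varphi(b - \bar x)$ and $\bar\psi(\bar x) = \psi(b - \bar x)$, the metric in \eqref{eq - doubly warped} becomes $d\bar x^2 + \bar\varphi^2(\bar x)\, g_{\S^p} + \bar\psi^2(\bar x)\, g_{\S^q}$, which is again of the doubly warped form, now with $\bar\varphi(0) = \varphi(b) = 0$. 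Since smoothness of a metric is a local property invariant under the smooth diffeomorphism $x \mapsto \bar x$, the metric is smooth at $x = b$ if and only if the reflected metric is smooth at $\bar x = 0$, and Lemma \ref{lem3} applies verbatim to the latter.

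Next I would translate the four conditions of Lemma \ref{lem3} back through the relations $\bar\varphi^{(k)}(\bar x) = (-1)^k \varphi^{(k)}(b - \bar x)$ and $\bar\psi^{(k)}(\bar x) = (-1)^k \psi^{(k)}(b - \bar x)$, which follow from the chain rule. Evaluating at $\bar x = 0$ gives $\bar\varphi^{(even)}(0) = \varphi^{(even)}(b)$ and $\bar\varphi'(0) = -\varphi'(b)$, so the hypotheses $\bar\varphi^{(even)}(0) = 0$ and $\bar\varphi'(0) = 1$ of Lemma \ref{lem3} become exactly $\varphi^{(even)}(b) = 0$ and $\varphi'(b) = -1$. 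Likewise $\bar\psi(0) = \psi(b)$ and $\bar\psi^{(odd)}(0) = -\psi^{(odd)}(b)$, so $\bar\psi(0) > 0$ and $\bar\psi^{(odd)}(0) = 0$ become $\psi(b) > 0$ and $\psi^{(odd)}(b) = 0$. This matches precisely the asserted conditions. Finally, the topology near $x = b$ coincides with the topology near $\bar x = 0$ of the reflected metric, which Lemma \ref{lem3} identifies as $\R^{p+1} \times \S^q$.

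The only point requiring care is the parity bookkeeping under this orientation-reversing substitution, in particular the sign in $\varphi'(b) = -1$: geometrically it encodes that $\varphi$ must \emph{decrease} to zero as $x$ increases to $b$, so that the $\S^p$-fibre collapses from the correct side and the space closes up smoothly as a disk in $\R^{p+1}$. Everything else is a direct transcription of Lemma \ref{lem3}, and I expect no substantial obstacle beyond verifying these sign and parity correspondences.
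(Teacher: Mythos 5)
Your argument is correct. The reflection $\bar x = b - x$ carries the metric into the same doubly warped form with $\bar\varphi(\bar x)=\varphi(b-\bar x)$, $\bar\psi(\bar x)=\psi(b-\bar x)$; the parity relations $\bar\varphi^{(k)}(0)=(-1)^k\varphi^{(k)}(b)$ and $\bar\psi^{(k)}(0)=(-1)^k\psi^{(k)}(b)$ are right; and smoothness of the extension across the locus where the $\S^p$-factor collapses is invariant under this diffeomorphism, since it is an isometry between the two metrics on $(0,b)\times\S^p\times\S^q$ compatible with the collapse. So Lemma \ref{lem3} yields exactly the stated conditions, including the sign $\varphi'(b)=-1$, and the identification of the topology near $x=b$ as $\R^{p+1}\times\S^q$. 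One point of comparison, though: the paper does not prove this statement at all --- both Lemma \ref{lem3} and Lemma \ref{lem4} are simply quoted from Petersen's book \cite{P}, where the endpoint smoothness criteria come from a direct local analysis of metrics with a collapsing spherical factor. Your reduction is therefore not the paper's route but a legitimate and self-contained way of deducing Lemma \ref{lem4} from Lemma \ref{lem3}: it makes explicit that the second lemma is the mirror image of the first (which is precisely what the sign change in $\varphi'$ encodes) and spares one from redoing the collapse analysis at the right endpoint.
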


These results allow us to prove that a smooth metric of type
\eqref{eq - metric} exists on the simply connected manifolds $\S^3$,
$\S^2 \times \R$ and $\R^3$.

\begin{proposition} \label{prop6}
If $\theta : [0,b]\to\R$ is a smooth function such that
$\theta^{-1}\{0\}=\{0\}$ and $\theta^{-1}\{\pi/2\}=\{b\}$, then the
metric \eqref{eq - metric} defines a smooth metric on $\S^3$ if and
only if $\theta'(0) = \theta'(b) = 1$ and $\theta^{(2k)}(0) =
\theta^{(2k)}(b) = 0$ for any positive integer $k$.
\end{proposition}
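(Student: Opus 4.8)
The plan is to reduce the smoothness question on $\S^3$ to the two local lemmas (Lemmas \ref{lem3} and \ref{lem4}) by recognizing the metric \eqref{eq - metric} as a doubly warped product of the form \eqref{eq - doubly warped} with $p=q=1$. Writing $\varphi(x) = \sin\theta(x)$ and $\psi(x) = \cos\theta(x)$, the metric becomes $dx^2 + \varphi^2(x)\,dy^2 + \psi^2(x)\,dz^2$, where $y$ and $z$ are to be interpreted as coordinates on two circles $\S^1$. The function $\varphi = \sin\theta$ vanishes exactly at $x=0$ (where $\theta=0$) and $\psi = \cos\theta$ vanishes exactly at $x=b$ (where $\theta=\pi/2$), so each endpoint of $[0,b]$ is a point where precisely one of the two warping functions degenerates. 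This is exactly the situation covered by the cited lemmas, one at each end, and I would first set up this dictionary carefully.

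First I would apply Lemma \ref{lem3} at $x=0$ with $\varphi = \sin\theta$ and $\psi = \cos\theta$. The smoothness conditions there read $\varphi^{(even)}(0)=0$, $\varphi'(0)=1$, $\psi(0)>0$, $\psi^{(odd)}(0)=0$, and I would translate each into a statement about $\theta$ via the chain rule. Since $\varphi = \sin\theta$, one computes $\varphi'(0) = \cos\theta(0)\,\theta'(0) = \theta'(0)$ (using $\theta(0)=0$), so $\varphi'(0)=1$ becomes $\theta'(0)=1$. The even-derivative vanishing of $\sin\theta$ at a zero of $\theta$ should be shown, by induction on the order, to be equivalent to the vanishing of all even derivatives of $\theta$ itself; similarly $\psi(0)=\cos\theta(0)=1>0$ holds automatically, and $\psi^{(odd)}(0)=0$ for $\cos\theta$ should reduce to the same even-order conditions on $\theta$. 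Symmetrically, I would apply Lemma \ref{lem4} at $x=b$: here $\psi=\cos\theta$ is the function vanishing (since $\theta(b)=\pi/2$), and the roles of $\varphi$ and $\psi$ are interchanged, yielding $\theta'(b)=1$ (after accounting for signs, since $\psi'(b) = -\sin\theta(b)\,\theta'(b) = -\theta'(b)$ and the lemma demands $\psi'(b)=-1$) together with the vanishing of all even derivatives $\theta^{(2k)}(b)=0$.

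The main obstacle, and the step deserving the most care, is the precise translation of the parity conditions on $\sin\theta$ and $\cos\theta$ into parity conditions on $\theta$. The claim that $\theta^{(2k)}(0)=0$ for all $k\ge 1$ is equivalent to $(\sin\theta)^{(even)}(0)=0$ and $(\cos\theta)^{(odd)}(0)=0$ is not completely immediate: one must argue, via Fa\`a di Bruno's formula or a direct induction, that at a point where $\theta$ vanishes together with all its even-order derivatives, the composition with $\sin$ (an odd function) inherits vanishing even derivatives, while the composition with $\cos$ (an even function) inherits vanishing odd derivatives, and conversely. I would phrase this as a short lemma on compositions: if $\theta$ is odd-parity through its jet at the point (i.e. $\theta^{(2k)}=0$ there) and $\theta'(0)=1$, then $\sin\theta$ is odd and $\cos\theta$ is even in the same jet-theoretic sense, and these parity statements are exactly the hypotheses of Lemmas \ref{lem3} and \ref{lem4}.

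Finally, I would assemble the two endpoint analyses into the global conclusion. Both lemmas assert that near each degenerate endpoint the topology is $\R^{2}\times \S^1$ (taking $p=q=1$), i.e. a solid torus; gluing a solid torus at $x=0$ to a solid torus at $x=b$ along the intermediate region $\S^1\times\S^1\times(0,b)$ produces $\S^3$ by the standard genus-one Heegaard splitting, which I would note to confirm that the resulting smooth manifold is indeed $\S^3$ rather than, say, $\S^2\times\S^1$. Putting the necessary and sufficient conditions from both lemmas together gives exactly $\theta'(0)=\theta'(b)=1$ and $\theta^{(2k)}(0)=\theta^{(2k)}(b)=0$ for every positive integer $k$, which is the statement of the proposition.
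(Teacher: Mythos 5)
Your proposal is correct and follows essentially the same route as the paper: apply Lemma \ref{lem3} at $x=0$ and Lemma \ref{lem4} at $x=b$ with $\varphi=\sin\theta$, $\psi=\cos\theta$ (roles interchanged at the endpoint $x=b$), and translate the resulting conditions on $\varphi,\psi$ into the stated conditions on $\theta$. Your two additional points --- the jet-parity lemma for the compositions $\sin\theta$, $\cos\theta$ and the genus-one Heegaard splitting identifying the glued space as $\S^3$ rather than $\S^2\times\S^1$ --- are exactly the steps the paper compresses into ``after a computation'' and into its appeal to the lemmas, so they are elaborations of the same argument rather than a different one.
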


\begin{proof}
It follows from the assumptions on $\theta$ that
$\theta(0,b)=(0,\pi/2)$. Hence, the functions $\varphi = \sin\theta$
and $\psi = \cos\theta$ are strictly positive on $(0,b)$. Lemma
\ref{lem3} and Lemma \ref{lem4} yield that \eqref{eq - metric} then
gives rise to a smooth metric on $\S^3$ if and only if the
conditions of Lemma \ref{lem3} are satisfied at $x=0$ and the
conditions of Lemma \ref{lem4} are satisfied at $x=b$, with the
roles of $\varphi$ and $\psi$ interchanged.

It is easy to see that the condition $\varphi'(0)=1$ is equivalent
to $\theta'(0)=1$ and that $\psi(0)>0$ is automatically satisfied.
Similarly $\psi'(b)=-1$ if and only if $\theta'(b)=1$ and
$\varphi(b)>0$ is automatically satisfied. The remaining conditions
are thus
$$ \varphi^{(even)}(0)=0, \ \psi^{(odd)}(0)=0, \ \psi^{(even)}(b)=0, \ \varphi^{(odd)}(b)=0. $$
After a computation and using that $\theta'(0)=\theta'(b)=1$, one
sees that these conditions are equivalent to
$\theta^{(2k)}(0)=\theta^{(2k)}(b)=0$ for any integer $k>0$.
\end{proof}

\begin{remark}
The function $\theta(x)=x$ satisfies the conditions given in
Proposition \ref{prop6}. In this case, the metric \eqref{eq -
metric} corresponds to the standard metric on $\S^3$ and the
Riemannian submersion of Proposition \ref{prop5} is the classical
Hopf fibration.
\end{remark}

\begin{proposition} \label{prop7}
If $\theta : [0,b]\to [0,\infty)$ is a smooth function such that
$\theta^{-1}\{0\}=\{0,b\}$ and $\theta^{-1}\{\pi/2\}=\varnothing$,
then the metric \eqref{eq - metric} defines a smooth metric on
$\S^2\times\R$ if and only if $\theta'(0) = -\theta'(b) = 1$ and
$\theta^{(2k)}(0) = \theta^{(2k)}(b) = 0$ for any non-negative
integer $k$.
\end{proposition}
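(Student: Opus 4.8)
The plan is to mirror the proof of Proposition \ref{prop6}, applying Lemmas \ref{lem3} and \ref{lem4} to the functions $\varphi = \sin\theta$ and $\psi = \cos\theta$. First I would extract the topology from the hypotheses: since $\theta$ is continuous with $\theta(0)=0$ and never attains $\pi/2$, the intermediate value theorem forces $0 \le \theta < \pi/2$ on $[0,b]$, with $\theta = 0$ only at the two endpoints. Hence $\varphi = \sin\theta$ vanishes exactly at $x=0$ and $x=b$ and is strictly positive in between, while $\psi = \cos\theta$ stays strictly positive throughout. Geometrically this means the $y$-circle collapses at both ends while the $z$-factor persists; collapsing the $y$-circle at both ends of $[0,b]\times\S^1_y$ produces an $\S^2$, and taking the product with the $\R_z$-line yields $\S^2 \times \R$. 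Since smoothness is a local condition and $\R$ is locally modeled on $\S^1$, Lemmas \ref{lem3} and \ref{lem4} apply at the two endpoints with $\varphi = \sin\theta$ as the collapsing factor. Note that, in contrast to Proposition \ref{prop6}, no interchange of $\varphi$ and $\psi$ is needed here, because $\sin\theta$ is the factor that collapses at \emph{both} ends.

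Next I would apply Lemma \ref{lem3} at $x=0$ and Lemma \ref{lem4} at $x=b$. Since $\theta(0)=\theta(b)=0$, one has $\psi(0)=\psi(b)=\cos 0 = 1 > 0$, so the positivity conditions on $\psi$ are automatic. Because $\varphi' = \cos\theta\cdot\theta'$ and $\cos\theta = 1$ at both endpoints, the conditions $\varphi'(0)=1$ and $\varphi'(b)=-1$ are equivalent to $\theta'(0)=1$ and $\theta'(b)=-1$, that is, to $\theta'(0) = -\theta'(b) = 1$. It then remains to show that the residual conditions $\varphi^{(\mathrm{even})}=0$ and $\psi^{(\mathrm{odd})}=0$ at the two endpoints are equivalent to the vanishing of all even-order derivatives of $\theta$ there.

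This last translation is the main point, and the step I expect to be the chief obstacle. At $x=0$, the conditions $\theta(0)=0$, $\theta'(0)=1$ together with $\theta^{(2k)}(0)=0$ for all $k\ge 1$ are precisely the statement that $\theta$ is odd at the origin, i.e.\ its Taylor expansion contains only odd powers of $x$. Since $\sin$ is odd and $\cos$ is even, composing with an odd function $\theta$ makes $\sin\theta$ odd and $\cos\theta$ even, so all even derivatives of $\varphi$ and all odd derivatives of $\psi$ vanish at $0$; conversely, writing $\theta = \arcsin\varphi$ and using that $\arcsin$ is odd shows that oddness of $\varphi$ forces oddness of $\theta$. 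This establishes the equivalence at $x=0$. For the endpoint $x=b$ I would introduce the reflected function $\hat\theta(s) = \theta(b-s)$, which satisfies $\hat\theta(0)=0$ and $\hat\theta'(0) = -\theta'(b) = 1$, and whose even-order derivatives at $0$ coincide with those of $\theta$ at $b$; the same oddness argument applied to $\hat\theta$ then yields the conditions at $b$. The only subtlety requiring care is the bookkeeping of the index $k=0$: here $\theta(0)=\theta(b)=0$ is already guaranteed by the hypothesis $\theta^{-1}\{0\}=\{0,b\}$, which is precisely why the statement may range over all non-negative integers $k$ rather than only the positive ones as in Proposition \ref{prop6}.
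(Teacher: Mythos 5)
Your proposal is correct and follows essentially the same route as the paper: the paper's proof of Proposition \ref{prop7} simply notes that $\varphi=\sin\theta$ and $\psi=\cos\theta$ are positive on $(0,b)$, invokes Lemma \ref{lem3} at $x=0$ and Lemma \ref{lem4} at $x=b$ (with $\varphi$ as the collapsing factor at both ends, exactly as you observe), and then proceeds ``in an analogous way'' to Proposition \ref{prop6}. Your parity/oddness argument via $\theta=\arcsin\varphi$ and the reflection $\hat\theta(s)=\theta(b-s)$ is a clean, correct way of carrying out the derivative computation that the paper leaves implicit, so it is a welcome elaboration rather than a departure.
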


\begin{proof}
Remark that the functions $\varphi=\sin\theta$ and $\psi=\cos\theta$
are positive on $(0,b)$. Hence, \eqref{eq - metric} defines a smooth
metric on $\S^2\times\R$ if and only if the conditions of Lemma
\ref{lem3} and Lemma \ref{lem4} are satisfied. We can now proceed in
an analogous way as in the proof of Proposition \ref{prop6} to
obtain the result.
\end{proof}

\begin{proposition} \label{prop8}
If $\theta : \R \to \R$ is a smooth function such that
$\theta^{-1}\{ k\pi \, | \, k \in\Z \} = \theta^{-1}\{ \pi/2 + k\pi
\, | \, k \in\Z \} = \varnothing$, then \eqref{eq - metric} defines
a smooth metric on $\R^3$, which is moreover complete.
\end{proposition}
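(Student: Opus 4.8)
The plan is to treat the two assertions separately, smoothness being immediate and completeness carrying the real content. For smoothness I would simply note that $(x,y,z)$ are global coordinates on $\R^3$ and that the three coefficient functions $1$, $\sin^2\theta(x)$ and $\cos^2\theta(x)$ in \eqref{eq - metric} are globally defined and smooth. The hypotheses $\theta^{-1}\{k\pi \mid k\in\Z\}=\theta^{-1}\{\pi/2+k\pi \mid k\in\Z\}=\varnothing$ guarantee that $\sin\theta$ and $\cos\theta$ never vanish, so $\sin^2\theta(x)>0$ and $\cos^2\theta(x)>0$ for every $x$. Hence the diagonal tensor \eqref{eq - metric} is positive definite at each point and is a genuine smooth Riemannian metric on all of $\R^3$; in contrast to Propositions \ref{prop6} and \ref{prop7}, no collapsing of fibres occurs and Lemmas \ref{lem3}, \ref{lem4} are not needed.

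For completeness I would invoke the Hopf--Rinow theorem and prove that every closed metric ball $\bar B(p_0,r)$, say centered at $p_0=(0,0,0)$, is compact. The first ingredient is that the coordinate function $x$ is $1$-Lipschitz for $d_g$: since $g\ge dx^2$ as quadratic forms, any curve $\gamma$ satisfies $L_g(\gamma)\ge\int|\dot x|\,dt\ge|x(\mathrm{end})-x(\mathrm{start})|$, whence $|x(p)|\le d_g(p_0,p)$ for all $p$. In particular $\bar B(p_0,r)$ lies in the slab $\{|x|\le r\}$.

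I expect the main obstacle to be that $\sin^2\theta$ and $\cos^2\theta$, though strictly positive, need not be bounded below by a positive constant on all of $\R$ (they may decay to $0$ as $x\to\pm\infty$), so $g$ cannot be compared to the Euclidean metric globally. The remedy is to localize via the Lipschitz bound above. Given $p\in\bar B(p_0,r)$, choose a curve $\gamma$ from $p_0$ to $p$ with $L_g(\gamma)\le r+1$ (possible since $d_g(p_0,p)\le r$); by the Lipschitz property $\gamma$ stays in the compact slab $\{|x|\le r+1\}$. On $[-(r+1),r+1]$ the continuous strictly positive functions $\sin^2\theta$ and $\cos^2\theta$ attain positive minima, so there is a constant $\kappa=\kappa(r)>0$ with $g\ge\kappa\,(dx^2+dy^2+dz^2)$ throughout that slab. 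Therefore $L_g(\gamma)\ge\sqrt\kappa\,L_{\mathrm{eucl}}(\gamma)\ge\sqrt\kappa\,\|p-p_0\|_{\mathrm{eucl}}$, giving $\|p-p_0\|_{\mathrm{eucl}}\le(r+1)/\sqrt\kappa$, so $\bar B(p_0,r)$ is contained in the Euclidean box $[-r,r]\times[-R,R]\times[-R,R]$ with $R=(r+1)/\sqrt\kappa$.

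Finally I would conclude: the metric $d_g$ induces the standard topology of $\R^3$, so $\bar B(p_0,r)$ is a closed subset of the compact box, hence compact. As every closed ball is compact, Hopf--Rinow yields that $(\R^3,g)$ is complete. The only genuinely non-formal point is the localization step—upgrading the pointwise positivity of $\sin^2\theta$ and $\cos^2\theta$ to a uniform lower bound on a bounded $x$-slab by means of the $1$-Lipschitz control of the coordinate $x$; the remainder is routine verification.
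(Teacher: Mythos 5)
Your proof is correct, but it is organized around a different completeness criterion than the paper's. The paper argues directly that every curve diverging to infinity has infinite length, splitting into two cases according to whether the first coordinate $\gamma_1$ is bounded along the curve: if it is unbounded, the term $dx^2$ alone forces infinite length; if it is bounded, then $\sin\theta$ and $\cos\theta$ are bounded below by a constant $c>0$ along the curve, and the length then dominates $c$ times the Euclidean length of the projection to the $(y,z)$-plane, which is a divergent plane curve and hence has infinite length. You instead establish properness --- every closed metric ball is compact --- and invoke Hopf--Rinow. The geometric content is the same in both arguments: the inequality $g \ge dx^2$ (your $1$-Lipschitz estimate for the coordinate $x$, the paper's unbounded case) and the uniform positive lower bound for $\sin^2\theta$ and $\cos^2\theta$ on compact $x$-intervals (your slab estimate $g \ge \kappa\, g_{\mathrm{eucl}}$, the paper's bounded case). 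Your localization step, confining curves of length at most $r+1$ to the slab $\{|x|\le r+1\}$, plays exactly the role of the paper's bounded-$\gamma_1$ case. A minor advantage of your route is that it dispenses with the paper's preliminary normalization ``we may assume $\theta(x)\in(0,\pi/2)$'' (which rests on connectedness of $\theta(\R)$ and the avoidance hypothesis); a minor advantage of the paper's route is that it uses only the divergent-curve characterization of completeness, with no need to discuss compactness of balls or the topology induced by $d_g$. Both are complete, rigorous proofs.
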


\begin{proof}
It is clear that the metric is smooth under the given assumptions.
To prove completeness, we may assume that $\theta(x) \in (0,\pi/2)$
for all $x\in\R$. Now let $\gamma : [0,T) \to \R^3 : t \mapsto
(\gamma_1(t),\gamma_2(t),\gamma_3(t))$ be a curve which diverges to
infinity, i.e., such that $\gamma_1(t)^2 + \gamma_2(t)^2 +
\gamma_3(t)^2$ tends to infinity if $t$ tends to $T$. We have to
prove that the length of this curve with respect to the metric
\eqref{eq - metric},
$$ L(\gamma) = \int_0^T \sqrt{(\gamma_1'(t))^2 + \sin^2(\theta(\gamma_1(t))) (\gamma_2'(t))^2 + \cos^2(\theta(\gamma_1(t))) (\gamma_3'(t))^2} \, dt, $$
is infinite. Therefore, we consider two cases.

First, assume that $\gamma_1$ is unbounded. In this case, we have
$$ L(\gamma) \geq \int_0^T |\gamma_1'(t)| \, dt \, \geq \, \lim_{t \to T} | \gamma_1(t) - \gamma_1(0) | \, = \, \infty. $$

Next, assume that $\gamma_1$ is bounded. In that case the function
$\theta(\gamma_1(t))$ is bounded away from $0$ and $\pi/2$ and hence
there exists a real constant $c > 0$ such that
$\sin(\theta(\gamma_1(s))) \geq c$ and $\cos(\theta(\gamma_1(s)))
\geq c$. This implies that
$$ L(\gamma) \geq c \int_0^T \sqrt{(\gamma_2'(t))^2 + (\gamma_3'(t))^2} \, dt \, = \, \infty. $$
The last equality is due to the fact that the integral
appearing on the left hand side is the Euclidean length of the
projection of the curve $\gamma$ onto the $(y,z)$-plane. Since
$\gamma$ diverges to infinity but $\gamma_1$ is bounded, this
projection must have infinite length.
\end{proof}

It is possible to check, using for instance our Theorem \ref{sphere}, that in the examples of Proposition 6 and Proposition 7, through  the points where $x=0$ or $x=b,$ there is no totally geodesic surface 
which is not tangent to the unit Killing field unless the function $\theta'$ is constant in a neighborhood 
of $x=0$ and $x=b,$ respectively. This is not a mere coincidence. We actually have the following global 
result.

 \begin{theorem}\label{global}
Let $M$ be a connected and simply connected complete Riemannian three-manifold carrying a unit Killing  field $\xi.$  Suppose that: 

\begin{itemize}
\item [(1)] no open subset of $M$ has constant non-negative curvature,
\item[(2)] through each point of $M$ there passes a totally geodesic surface which is neither orthogonal nor tangent to $\xi.$
\end{itemize}

Then $M$ is isometric to $\R^3$ endowed with the metric: 
$$ds^2 = dx^2 + \sin^2\theta(x) dy^2 + \cos^2\theta(x) dz^2,$$
where $\theta : \R \to (0, \pi/2)$ is a smooth function
whose derivative $\theta'$ is not constant on any interval. Moreover  $\xi =\partial_y+\partial_z.$
\end{theorem}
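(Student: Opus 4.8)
The plan is to upgrade the \emph{pointwise} normal form of Theorem \ref{theo2} to a \emph{global} one and then reconstruct $M$ by flowing vector fields. First, hypothesis (2) together with the implication (i)$\Rightarrow$(iii) of Theorem \ref{theo2} furnishes, around every point, local coordinates in which the metric is \eqref{eq - metric} with $\xi=\partial_y+\partial_z$; by Remark \ref{coordinate} the angle is intrinsic ($\cos\theta=\langle\xi,N\rangle$) and ``neither orthogonal nor tangent'' means precisely $\theta\in(0,\pi/2)$ at that point. By Proposition \ref{prop4} one has $\tau=-\theta'$, so if $\theta'$ were constant on some $x$-interval the corresponding open set of $M$ would have constant sectional curvature $(\theta')^2\ge 0$, contradicting (1). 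Hence $\theta'$ is non-constant on every interval, and in particular $\tau$ is nowhere locally constant, so $\{\mathrm{grad}\,\tau\neq 0\}$ is dense.

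The heart of the argument is the construction of a single global coordinate $x$. On the dense set $\{\mathrm{grad}\,\tau\neq 0\}$ the line field spanned by $\mathrm{grad}\,\tau$ is intrinsic and, in every chart, equals $\R\,\partial_x$; therefore the leaf foliation by the level sets of $x$ is intrinsically determined there, so the foliations coming from different charts agree on a dense set and hence everywhere. This produces a globally defined smooth foliation of $M$ by flat surfaces with a smooth unit normal line field; since $M$ is simply connected this line field is orientable, giving a global unit vector field $e_1$ which in each chart is $\partial_x$. Its dual $1$-form is closed (locally $dx$), hence exact, so $e_1=\mathrm{grad}\,x$ for a smooth $x:M\to\R$ with $|\mathrm{grad}\,x|\equiv1$. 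By Proposition \ref{prop4}, $\widetilde\nabla_{e_1}e_1=0$, so the integral curves of $e_1$ are complete geodesics along which $x$ grows at unit speed; completeness of $M$ then forces $x$ to be surjective onto $\R$, and flowing the leaf $\{x=0\}$ by $e_1$ identifies $M$ diffeomorphically with $\R\times\{x=0\}$. Since level sets of $x$ are now connected and $\mathrm{grad}\,\tau\parallel e_1$ with $\xi(\tau)=0$ (Lemma \ref{lem2}), $\tau$ is a function of $x$; integrating and using that the chart value of $\theta$ lies in $(0,\pi/2)$ at every point yields a well-defined smooth $\theta:\R\to(0,\pi/2)$.

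Finally I reconstruct the metric. Each leaf carries the restrictions of $\xi$ and $\eta_0:=e_1\times\xi$, two commuting orthonormal fields that are parallel within the leaf, so each leaf is a complete homogeneous flat surface, hence $\R^2$, a cylinder, or a torus; as $M\cong\R\times\{x=0\}$ is simply connected, the leaf must be $\R^2$ and $M\cong\R^3$. To pin down the metric, globalise the local fields $\partial_y,\partial_z$: on each leaf they are the unique orthogonal parallel pair with $\partial_y+\partial_z=\xi$ and norms $\sin\theta,\cos\theta$, the two solutions being exchanged by reflection across $\xi$; the ambiguity is removed globally by selecting $Y$ with $\langle Y,\eta_0\rangle<0$, which is smooth and nonvanishing precisely because $\theta\in(0,\pi/2)$. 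In each chart $Y,Z$ coincide with $\partial_y,\partial_z$, so they are commuting Killing fields with $[Y,e_1]=[Z,e_1]=0$; the joint flow of $e_1,Y,Z$ defines a diffeomorphism $\Psi:\R^3\to M$ with $\partial_x=e_1$, $\partial_y=Y$, $\partial_z=Z$. Reading off $|e_1|=1$, $|Y|=\sin\theta$, $|Z|=\cos\theta$ and the orthogonality already established gives $g=dx^2+\sin^2\theta(x)\,dy^2+\cos^2\theta(x)\,dz^2$ with $\xi=\partial_y+\partial_z$, and $\theta'$ is non-constant on every interval by the first step.

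The main obstacle is the middle step: showing that the pointwise charts of Theorem \ref{theo2} fit together into one global coordinate $x$. This is exactly where both hypotheses are indispensable: (1) guarantees $\mathrm{grad}\,\tau\neq0$ on a dense set, which is what makes the leaf foliation, and hence $e_1$, intrinsic and therefore chart-independent, while simple connectivity both orients $e_1$ and, through completeness, forces the global topology to be $\R^3$, thereby ruling out the $\S^3$ and $\S^2\times\R$ models of Propositions \ref{prop6} and \ref{prop7}.
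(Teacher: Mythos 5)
Your route is genuinely different from the paper's: you replace the paper's monodromy extension of the local splitting $\xi=X_1+X_2$, the Carri\`ere--Ghys product theorem for the orthogonal pair of foliations, and the Poincar\'e--Bendixson argument for injectivity of the global chart, by an intrinsic construction of the coordinate $x$ from the line field $\R\,\mathrm{grad}\,\tau$ and the flow of the resulting geodesic field $e_1$. That part is sound: hypothesis (1) does make $\{\mathrm{grad}\,\tau\neq 0\}$ dense, so the chart line fields $\R\,\partial_x$ glue into a global line field, simple connectivity orients it and makes its dual $1$-form exact, and completeness of the geodesic field $e_1$ gives $M\cong\R\times\{x=0\}$ with connected, flat, simply connected, complete leaves.

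The gap is in the globalization of $\theta$ and of the pair $(\partial_y,\partial_z)$. What you have proved to be chart-independent is only the direction $\partial_x$. Two overlapping charts satisfy $\theta_1'=-\tau=\theta_2'$, so their angle functions agree only up to an additive constant, and nothing you prove excludes a nonzero constant: Remark \ref{coordinate} makes $\theta$ intrinsic to the pair (manifold, chosen totally geodesic surface), not to $M$ alone, and different charts may come from different surfaces. Moreover, on a flat leaf the orthogonal parallel decompositions $\xi=Y+Z$ form a \emph{one-parameter} family (with $\|Y\|$ taking any value in $(0,1)$, each up to reflection across $\xi$), not just a reflection pair, so ``the unique orthogonal parallel pair with norms $\sin\theta,\cos\theta$'' presupposes exactly the knowledge of $\theta$ that is missing. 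The step is not cosmetic: running your leaf-wise construction with a wrong constant, i.e.\ with $\tilde\theta=\theta+c$, $c\neq 0$, produces fields of the form $A(x)\partial_y+B(x)\partial_z$ with non-constant $A,B$; these are not Killing and do not commute with $e_1$, so the final joint-flow coordinates do not exist. This is precisely the point where the paper invokes Theorem \ref{sphere}(2): under hypothesis (1) a third totally geodesic surface through a point, not tangent to $\xi$, would force constant non-negative curvature, whence the decomposition $\xi=X_1+X_2$ into commuting Killing fields without zeros is unique up to order, and therefore chart-independent. Your proof needs this (or a substitute; for instance, from the scalar curvature formula of Proposition \ref{prop4}, $(\theta')^2-4\cot(2\theta)\theta''$, together with $\tau=-\theta'$ and $e_1(\tau)=-\theta''$, one gets $\cot(2\theta)=(\mathrm{Scal}-\tau^2)/(4\,e_1(\tau))$ on the dense set $\{\mathrm{grad}\,\tau\neq 0\}$, which pins $\theta$ down intrinsically there and hence everywhere by continuity). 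With that repair, the remaining steps (gluing of $Y,Z$, completeness of the flows, and injectivity of the joint flow, which follows from simple connectivity by a standard covering argument) do go through.
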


\begin{proof}

Let $p_0$ be a fixed point in $M.$ By Theorems \ref{theo2} and \ref{sphere},  $\xi$ admits an orthogonal  decomposition, $\xi =X_1+X_2,$   in a neighborhood of $p_0,$  where $X_1$ and $X_2$ are two Killing fields which commute and have no zeros. 
Moreover this decomposition is unique up to ordering of $X_1$ and $X_2.$ By the same theorems, given any point $p\in M$ and any continuous path $\gamma$ joining $p_0$ to $p,$ we  can extend continuously this decomposition along $\gamma$ till the point $p.$  As $M$ is simply connected, by a standard monodromy argument, the decomposition we obtain at $p$ is independent on the choice of the path $\gamma.$ We get in this way a global orthogonal decomposition $\xi= X_1+X_2,$ where $X_1$ and $X_2$ are now global smooth Killing vector fields on $M$ which commute and have no zeros. 

Denote by $u$ a unit vector field on $M$ orthogonal to $X_1$ and $X_2.$ Such a global smooth vector field exists since $M$ is orientable. From the proof of (3)-(ii) of Theorem \ref{theo2}, we know that the distribution spanned by $u$ and $X_1$ is integrable and its integral surfaces are totally geodesic. Therefore the manifold $M$ admits a foliation $\mathcal F$ by totally geodesic surfaces. Let $\mathcal{F}^{\perp}$ be the orthogonal foliation, that is, the foliation by the orbits of $X_2.$ By a result of Carri\`ere and Ghys \cite{CG}, $(\mathcal{F},\mathcal{F}^{\perp})$ is a product. This means   there is a diffeomorphism between $M$ and the product $ \Sigma\times\R,$ where $\Sigma$ is any fixed leaf of $\mathcal F,$ sending the leaves of $\mathcal{F}$ to $\Sigma\times\{\ast\}$ and those of $\mathcal{F}^{\perp}$ to $\{\ast\}\times \R.$  Denote by $z$ the coordinate on the $\R$ factor. Under this diffeomorphism, the vector field $X_2$ therefore corresponds to 
the field $f(z)\partial_z$ for some function $f.$ So, up to reparameterizing the $\R$ factor, we can assume
that  $X_2$ correponds to the field $\partial_z.$  It is clear that  $\Sigma$ is simply connected and is therefore, topologically, either a plane or a sphere.  $X_1$ is vector field on $\Sigma$ which has no zeros, so $\Sigma$ is topologically a plane. It is moreover not difficult to check that $\Sigma$ is complete.

Fix an orientation on $\Sigma$ and denote by $J$ the rotation over 
90 degrees in $T\Sigma.$ As in the proof of (3) of Theorem \ref{theo2}, we consider on $\Sigma$ the fields $X_1$ and $JX_1$
which commute, $[X_1, JX_1]=0,$ and are complete since they have bounded norms and $\Sigma$ is complete. It follows that we can find a global chart for $\Sigma$ with domain $\R^2$  and $\partial_x= JX_1$ and $\partial_y=X_1$ for the standard coordinates $(x,y)$ on $\R^2.$  We include a proof of this fact for completeness.  Let ${(\phi_x)}_{x\in\R}$ and ${(\psi_y)}_{y\in\R}$ be the flows of $JX_1$ and $X_1,$ respectively. 
Consider the mapping: 
$$ (x,y)\in \R^2\to F(x,y)=(\phi_x\circ\psi_y) (p_0)\in \Sigma.$$
$F$ is a local diffeomorphism with $(dF)(\partial_x)= JX_1$ and $(dF)(\partial_y)=X_1.$  We will show it is a global diffeomeorphism, this will  provide the global chart we want.  

- {\it $F$ is one-to-one:} by contradiction, suppose $(x_1,y_1)$ and $(x_2,y_2)$ are distinct points in $\R^2$ with $F(x_1,y_1)=F(x_2,y_2).$ Assume that $x_1=x_2$ and $y_1\ne y_2,$ then the orbit of $X_1$ through $\phi_{x_1}(p_0)$ will be closed
and  will bound  a disk in $\Sigma$ inside which necessarily $X_1$ will have a zero, which is a contradiction. The case $x_1\ne x_2$ and $y_1=y_2$ is similar. Assume now that $x_1\ne x_2$ and $y_1\ne y_2.$ Set $p_1=(\phi_{x_1}\circ\psi_{y_1})(p_0)=(\phi_{x_2}\circ\psi_{y_2})(p_0).$ The orbit of $JX_1$ through $p_0$ and the orbit of $X_1$ through  $p_1$ intersect at two distinct points, namely $\phi_{x_1}(p_0)$ and $\phi_{x_2}(p_0).$ However an orbit of $JX_1$ can intersect an orbit of $X_1$ at most once. Indeed let $\gamma_1$ and $\gamma_2$ be orbits of $X_1$ and $JX_1,$ respectively. Assume they intersect more than once. Then there will be a bounded disk $\Omega$ in $\Sigma$ with boundary the union of an arc $\alpha_1\subset \gamma_1$ and an arc $\alpha_2\subset \gamma_2$ with common endpoints $p$ and $q.$ We assume that along $\alpha_2,$ the field $X_1$ points into $\Omega.$ The case when $X_1$ points outside $\Omega$ can be treated in a similar way. Consider any point $q_1$ on $\alpha_2$ distinct from $p$ and $q.$ The half orbit $\beta:=\{\psi_t(q_1), t>0\}$
of $X_1$ 
starting from $q_1$ will  be entirely contained in $\Omega.$ It follows from the  Poincar\'e-Bendixon theorem, see for instance \cite{HS}, that  the accumulation set of $\beta$ must contain a zero or a closed orbit of $X_1$, which is again a contradiction.

-{\it F is onto:} since $F$ is a local diffeomorphism, the image $F(\R^2)$ is open in $\Sigma,$ so to conclude it is enough to see that it  is closed. Let $(x_n,y_n), n\in \Bbb N,$ be a sequence of points in $\R^2$ with $F(x_n,y_n)\to p_{\infty}\in \Sigma$ as $n\to \infty.$ For $\epsilon>0$ small enough, the mapping $(x,y)\in(-\epsilon, \epsilon)
\times (-\epsilon, \epsilon) \to (\phi_x\circ\psi_y) (p_{\infty})\in \Sigma$ is an embedding with image an open neighborhood $V$ of $p_{\infty}.$ For $n$ fixed and big enough, we have $F(x_n,y_n)\in V$ and so there is $(x,y) \in (-\epsilon, \epsilon)
\times (-\epsilon, \epsilon)$ such that $(\phi_x\circ\psi_y) (p_{\infty})= (\phi_{x_n}\circ\psi_{y_n}) (p_0).$ Therefore 
$p_{\infty}= \phi_{x_n-x}\circ\psi_{y_n-y} (p_0)$ lies in $ F(\R^2).$

In the global coordinates $(x,y,z),$ like in the proof of (3) in Theorem \ref{theo2} and with the same notations, the metric on $M$ writes:
$$ds^2=(1-\nu(x)^2)(dx^2+dy^2)+ \nu(x)^2 dz^2,$$
where $\nu(x)=\|X_2\|^2$ is a function of $x$ alone. We now make the change of coordinate 
$\bar x(x)= \int \sqrt{1-\nu(x)^2} dx.$ By the completeness of the metric $g,$ the function $\bar x$ is a bijection from $\R$ onto $\R.$  
Setting $\nu(x)=\cos \theta (\bar x)$ for some smooth function $\theta: \R\to (0,\pi/2),$ the metric writes  in the global coordinates $(\bar x, y, z):$
$$ds^2 = d\bar{x}^2 + \sin^2\theta(\bar x) dy^2 + \cos^2\theta(\bar x) dz^2.$$
The condition (1) in the statement means precisely that $\theta'$ is not constant on  any interval (see the proof of (2) in Theorem \ref{sphere}).
\end{proof}

%%%%%%%%%%%%%%%%%%%%%%%%%%%%%%%%%%%%%%%%%%%%%%%%%%%%%%%%%%%%%%%%%%%%%%%%%%%%%%%%%%%%%%%%%%%%%%%%%%%%%%%%%%%%%%%%%%%%%%%

\end{document}